\newcommand{\fracpartial}[2]{\frac{\partial #1}{\partial  #2}}
\newcolumntype{M}[1]{>{\centering\arraybackslash}m{#1}}
\newcolumntype{P}[1]{>{\centering\arraybackslash}p{#1}}
\newcolumntype{P}[1]{>{\centering\arraybackslash}p{#1}}
\newcommand{\beq}{\begin{equation}}
\newcommand{\eeq}{\end{equation}}
\newcommand{\beqr}{\begin{eqnarray}}
\newcommand{\eeqr}{\end{eqnarray}}
\newtheorem{Rem}{Remark}
\newcommand{\Htwo}{$\mathcal{H}_2$ }
\begin{document}

\title{Controlled Descent Training \thanks{This work has partially been supported by the project OCTON I-II at Chalmers University of Technology.}}

\author[1,2]{Viktor Andersson*}

\author[2]{Bal\'azs Varga}

\author[1,2]{Vincent Szolnoky}

\author[]{Andreas Syrén}

\author[2]{Rebecka Jörnsten} 

\author[2]{Bal\'azs Kulcs\'ar*}

\authormark{Viktor Andersson \textsc{et al}}

\address[1]{\orgdiv{Centiro AB}, \orgaddress{\state{Borås}, \country{Sweden}}}

\address[2]{\orgdiv{Chalmers University of Technology}, \orgaddress{\state{Gothenburg}, \country{Sweden}}}



\corres{*Viktor Andersson \email{vikta@chalmers.se}, *Bal\'azs Kulcsa\'ar  \email{kulcsar@chalmers.se}}


\abstract[Summary]{ In this work, a novel and model-based artificial neural network (ANN) training method is developed supported by optimal control theory. The method augments training labels in order to robustly guarantee training loss convergence and improve training convergence rate. Dynamic label augmentation is proposed within the framework of gradient descent training where the convergence of training loss is controlled. First, we capture the training behavior with the help of empirical Neural Tangent Kernels (NTK) and borrow tools from systems and control theory to analyze both the local and global training dynamics (e.g. stability, reachability). Second, we propose to dynamically alter the gradient descent training mechanism via fictitious labels as control inputs and an optimal state feedback policy. In this way, we enforce locally \Htwo optimal and convergent training behavior. The novel algorithm, \textit{Controlled Descent Training} (CDT), guarantees local convergence. 
CDT unleashes new potentials in the analysis, interpretation, and design of ANN architectures. The applicability of the method is demonstrated on standard regression and classification problems.} 

\keywords{label augmentation, gradient descent training, Neural Tangent Kernel, optimal labels, convergent learning, label selection}


\maketitle


\section{Introduction}
Machine learning (ML) and Artificial Intelligence (AI) are able to model complex and highly non-linear input-output relationships. ML is very powerful but often lacks the guarantees and predictability required for system and control theory applications. 




Deep Artificial Neural Networks (ANNs) are particularly useful tools in machine learning that are commonly trained with gradient descent methods (GD). ANN architectures have made  strides in recent years in solving complicated tasks, like image recognition \cite{tripathi2021cnn}, natural language processing \cite{lauriola2022nlp}, artificial image generation \cite{aggarwal2021gans}, and other engineering tasks where classical statistical models might struggle. Problems in the domain of Control Theory have also had a surge in ANN and ML related works \cite{wan2020modelfree}\cite{maiworm2020guassian}\cite{marvi2020saferein}. The main criticisms of ANNs are their unpredictable training behavior, low output interpretability, and highly hyper-parameter dependent performance.

Making ANN learning behavior more predictable, with guarantees of convergence and reduced hyper-parameter search space would allow these powerful tools to be used in online systems and control applications. Tackling ANNs from a control systems perspective bridges a gap between the two disciplines and gives access to mathematically well grounded methods and tools such as stability and reachability analysis for ANNs.

Recent insights into the learning behavior of ANNs come from the study of ANNs with infinite width. \cite{jacot} introduces the Neural Tangent Kernel (NTK) for ANNs, describing the gradient descent training behavior as a linear ordinary difference equation (ODE). The authors use a first-order Taylor linearization of the NTK to derive a linear ODE description of the training for finite-width ANNs. Follow-up work by \cite{googlepaper} demonstrates the empirical region of validity of the Taylor linearization. The higher-order terms have been studied in \cite{huang2020hierarchy}. Tangential work studied the infinite NTK for many different architectures like CNNs \cite{googlepaper}, RNNs \cite{alemohammad2020recurrent} and transformers \cite{ntktransformers}. Moreover, \cite{yang2021tensor} introduces the notion of \emph{architectural universality} for the NTK and demonstrates its existence for any ANN architecture. The NTK has also been used for generative architectures. The internal stability of Generative Adversarial Networks (GANs) and other encoder-decoder ANN structures are revealed by the NTK framework \cite{franceschi2021gans}. Other works study the NTK for collegial ensemble methods \cite{Littwin2020ensambles}. The major criticism of the NTK is its computational complexity. \cite{zandieh2021scaling} introduce an NTK approximation method for improved computation speed of infinite NTKs. \cite{novak202fFast} significantly improves both computational times and reduce the memory required for finite NTK calculations by exploiting the Jacobian symmetric structure. This allows the NTK to be used for real-time training analysis.

The NTK has been used to explain aspects of ANN training dynamics already. For example, convergence properties of the NTK have been used for interpolation of phase transitions \cite{montanari2022interpolation}. \cite{nguyen2021bound} finds bounds on the smallest eigenvalue associated with memorization. \cite{jia2021test} suggest using the NTK to compare the informational discrepancy between test and training data sets. As demonstrated, the NTK is highly relevant to assess the training behavior of many ANNs architectures. The NTK description unlocks a more interpretable and dependable perspective on ANN training behavior. 

Optimal control has well-established analytic proofs of stability and optimality under conditions of reachability and open-loop stability\cite{kwakernaak1972linear}. Optimal control has made progress in recent years\cite{i402019Dolgui}, improving the reliability for high-dimensional and large scale systems\cite{largescaleOCHagebring}\cite{largescale2022Zhou}. Our main contribution is bringing these optimal control methods into the world of ML and ANNs.

In this paper, we define two categories of methods influencing the ANN training dynamic using the NTK. The first category is \emph{indirect} control through NTK reference and model matching. \cite{wanga2022PINNs} is an example of an implicit method, using the eigenvalues of the NTK to improve the convergence rate for physically informed neural networks (PINNs). The NTK has been used to complement existing policy gradient methods \cite{varga2021policy} and robust Q-learning \cite{varg2023deepQ}. The latter papers implement cascade optimal control methods to enhance the convergence and stability of reinforcement learning methods, implicitly.

In this paper, we follow a \emph{direct} approach to influence the ANN training dynamics using optimal control. Firstly, we introduce the concepts of stability and reachability for a given ANN, based purely on the NTK. This allows for directed and reduced hyper-parameter search spaces. Secondly, we develop a novel ANN training algorithm introduced as \emph{Controlled Descent Training} (CDT). CDT is a model-based,  \Htwo optimal state feedback label augmentation method built on the NTK that provides convergence guarantees (locally) and explicitly minimizes the cumulative training loss. This brings predictability of ANN learning, increased robustness to hyper-parameter choices, and guarantees otherwise missing for ANNs without compromising on performance.

We evaluate the performance of CDT compared to GD with different fully connected and convolutional ANN architectures. Both training algorithms are benchmarked on datasets selected from classic datasets (regression and classification). 
We demonstrate and report on the accuracy and numerical results of CDT algorithm with clear indications of its limitation.


The layout of the paper is as follows. In the Preliminaries (Section \ref{sec:prliminaries}), the NTK and the uncontrolled ANN training dynamics (borrowed from \cite{jacot}) are introduced and expanded on to describe the label augmented training dynamics.
Section 3 defines stability for the unaugmented and reachability for the augmented 
training dynamics.
In Section 4 the CDT algorithm is introduced.
Section 5 demonstrates the CDT algorithm performance compared with gradient descent for different architectures and ML problems.
We conclude the paper by highlighting future research directions. Additionally, the paper features multiple appendices elaborating on some of the assumptions and findings of the paper.

\section{Preliminaries}
\label{sec:prliminaries}
In this section, we briefly show how the Neural Tangent Kernel describes the training dynamics of an ANN.

We introduce the notation for an artificial neural network (ANN) as $F(\theta(k), x): \; \mathbb{R}^{r\times n_0} \rightarrow \mathbb{R}^{r\times n_L}$ where $\theta(k) \in \mathcal{C}$ denotes the parametrization comprising weights and biases at training step $k$. $\mathcal{C}$ here denotes at least once continuous differentiability. Let the output of the ANN for a fixed $n_0$-dimensional set of data $x \in \mathbb{R}^{r\times n_0}$ be $\hat y(\theta(k), x) = F(\theta(k), x) \in \mathbb{R}^{r\times n_L}$. Assume it is continuously differentiable with respect to $\theta(k)$: $\hat y(\theta(k), x)\in \mathcal{C}$.
$r$ denotes the number of data points.

\subsection{Neural Tangent Kernel}
The NTK is adopted to describe the ANN evolution in function space during gradient descent training\cite{jacot}. As the ANN width increases, the NTK evolution rate stagnates. For finite-width ANNs the NTK is time-varying resulting in a nonlinear Ordinary Difference Equation (ODE).

This latter NTK is referred to as the \emph{empirical tangent kernel}. The indirectly time-varying NTK ODE can be approximated as a time-invariant system by Taylor linearization around the initial parameters \cite{googlepaper}. Linearization of the finite width NTK ODE description paves the way for our main contribution, analysis, and explicit control of the ANN training dynamics. As such we use the empirical NTK and define it as follows. 
\begin{definition}
\textbf{Neural Tangent Kernel.} Given two data points $x_i, x_j \in \mathbb{R}^{ n_0} \subset x \in \mathcal{D}$, the NTK for an $r$ -batch size, $n_0$-input $n_L$-output ANN $F(\theta(k), x): \; \mathbb{R}^{r\times n_0} \rightarrow \mathbb{R}^{r\times n_L}$ at time instance $k \in \mathbb{Z}_+$, is 
\begin{equation}
    \Theta_{i,j}(k) =\left(\fracpartial{\hat{y}(\theta(k),x_i)}{\theta(k)}\fracpartial{\hat{y}(\theta(k),x_j)}{\theta(k)}^T\right) \in \mathbb{R}^{n_L\times n_L} \label{empiricalNTK}
\end{equation}
where $\hat y(\theta(k), x)$ is the output of the ANN. \\ 
We define the full NTK for a subset of data $x\in\mathcal{D}\subseteq \mathbb{R}^{n_0\times r}$ as
\begin{equation}
    \Theta(k)=\begin{bmatrix}
    \Theta_{0,0}(k) & \Theta_{1,0}(k) & \hdots & \Theta_{r,0}(k)\\
    \Theta_{0,1}(k) & \Theta_{1,1}(k) & & \vdots \\
    \vdots & & \ddots & \vdots
    \\
    \Theta_{0,r}(k) & \hdots & \hdots & \Theta_{r,r}(k)
    \end{bmatrix} \in \mathbb{R}^{rn_L\times rn_L}
    \label{eq:kernel}
\end{equation}
\end{definition}
The above mentioned empirical Neural Tangent Kernel $\Theta(k)$ in eq. \eqref{eq:kernel} is always symmetric and positive semidefinite. Positive-definiteness of the NTK ensures the convergence of the loss to a minimum for a class of loss functions (e.g., quadratic losses) \cite{jacot}. A weak assumption for positive-definiteness can be made if each pair of training inputs are not parallel and lie within a Euclidean unit ball \cite{chen2020generalized}. Some additional conditions guaranteeing its definiteness are given in\cite{yegeometry}.

\subsection{Local and global ANN training dynamics}
In this section, with the help of \cite{jacot,googlepaper} local and global finite-width ANN training behavior is introduced.
 
Assuming a constant target vector $y\in \mathcal{Y} \subseteq \mathbb{R}^{n_L\times r}$ (i.e., static labels in supervised learning), the output follows certain dynamics dictated by gradient descent. 
For the sake of brevity, we denote $\hat{y}(\theta(k),x)$ as $\hat{y}(k)$, bearing in mind that the estimated output $\hat{y}(k)$ still depends on the input data sequence. 
Furthermore, we assume the loss function is at least once continuously differentiable $\mathcal{L}(\hat{y}(k), y) \in \mathcal{C}$ with respect to $\theta(k)$ and $\hat y(k)$ at any time instance $k\in \mathbb{Z}$.
The evolution of the parameter vector $\theta(k)$ and thereof the network output $\hat{y}(k)$ under gradient descent with learning-rate $\alpha$ is given by
\begin{eqnarray}
\label{eq:theta_update}
\theta(k+1) &=& \theta(k)-\alpha\fracpartial{\mathcal{L}(\hat{y}(k), y)}{\theta(k)}
= \theta(k)-\alpha 
\fracpartial{\hat{y}(k)^T }{\theta(k)}  \fracpartial{\mathcal{L}(\hat{y}(k), y)}{\hat{y}(k)} \\
\hat{y}(k+1) &=& \hat{y}(k)-\alpha
\Theta(k) \fracpartial{\mathcal{L}(\hat{y}(k),y)}{\hat{y}(k)}=f_{\theta}(\hat y(k))
\label{eq:outputODE}.
\end{eqnarray}
Eq.~\eqref{eq:outputODE} captures the evolution of the  global training dynamics as a nonlinear time-discrete (ODE).

As can be seen in eq. \eqref{eq:outputODE} the symmetric empirical kernel $\Theta(k)$ has a central role in describing the training behaviour. 


Proposition 1 in Appendix \ref{appx:lipschitzness} (taken from \cite{khalil2002nonlinear}) demonstrates that the global training under gradient descent has a unique solution on a discrete time interval $k\in[k_i,k_f]$.
A local and linear (in $\hat y$) training dynamics can be obtained at any time instance $k$, by first order Taylor series approximation of eq.\eqref{eq:outputODE} (see Appendix \ref{appx:linearization} for full derivation). In this case,  $\hat y(k)$ is approximated at $\theta(k_0)$ when $\vartheta(k) \equiv \theta(k)-\theta(k_0)$, such that
\begin{eqnarray}
\hat y_{\vartheta}(k+1) = 
 \hat y_{\vartheta}(k)-
\alpha\Theta(k_0) \fracpartial{\mathcal{L}(\hat{y}_\vartheta(k),y)}{\hat{y}_\vartheta(k)}=f_{\vartheta}(\hat y_\vartheta(k)).
\label{eq:linear}
\end{eqnarray}

A bound on the error between the local and global dynamics can be found using the Lagrangian error bound (see Appendix \ref{appx:Lagrangian}). This bound allows us to quantify the error introduced via the first order approximation. Additional linearization may be required for certain loss functions to reach input affine form (see Appendix \ref{appx:losslin}). 



\subsection{Controlled ANN training dynamics with label augmentation} \label{sec:label augments}
As mentioned previously, one of the main contributions of the paper is to explicitly control the NTK training dynamics. As such, we introduce a dynamic label augmentation method, i.e. inject fictitious, time dependent labels by $y_u(k)$ as
\begin{equation}
    \bar{y}(k) = y + y_u(k)
\end{equation}
Unlike $y$, $\bar{y}(k)$ dynamically alters the targets to be estimated by the ANN. The label augmented dynamics (controlled global training dynamics) is then formulated by,
\begin{eqnarray}
\hat{y}(k+1) &=& \hat{y}(k)-\alpha
\Theta(\theta(k)) \fracpartial{\mathcal{L}(\hat{y}(k),\bar y(k))}{\hat{y}(k)}.
\label{eq:outputODEmse}
\end{eqnarray}

In eq. \eqref{eq:outputODEmse}, the injected labels purports to give a new degree of freedom to influence the local and global training behavior. In Section \ref{sec:CDT} an optimal way to select the fictitious labels $y_u(k)$ is demonstrated. 


\section{Analytic properties of discrete-time training dynamics}
Before the CDT algorithm 
is introduced two conditions  are established under which CDT guarantees convergence of the local training dynamics; stability and reachability.

Firstly, global and local stability concepts of ANN training dynamics are defined around specific equilibrium values. Stability guarantees boundedness of the unaugmented training dynamics. 
Secondly, we analyze the local controlled training dynamics from a reachability perspective. If reachability conditions are met, this ensures that the label augments $y_u(k)$  can help us to reach any points in $\mathcal{Y}$ within a finite number of steps. Both properties can be verified before training using the initial kernel $\Theta_0$.

\subsection{Boundedness of the training dynamics}
We relate boundedness of the network output via eq. ~\eqref{eq:outputODE} to the context of internal stability. Stability refers to the existence of a finite bound between the ANN output $\hat{y}$ and some equilibrium output $\hat{y}_e$. Here, a training equilibrium point $\hat y_e$ is defined as follows,
\begin{eqnarray}
\hat y(k + 1) = \hat y(k) = \hat y_e.
\label{eq:equilibrium}
\end{eqnarray}

Furthermore, it follows from the dynamics in eq. ~\eqref{eq:outputODE} that for most conventional losses equilibrium points may exist at $\hat{y}_e=y$. We discuss the conditions under which an equilibrium point may exist in Appendix \ref{appx:equilibriums}. 
The formal stability definition of eq. ~\eqref{eq:outputODE} can be captured by the following definition.
\begin{definition}
\textbf{Uniform internal stability \cite{rugh}}
The discrete-time ANN training dynamics with network output $\hat y(k)$, initial network output $\hat y(k_0)=y_0$ and equilibrium point $\hat y_e$ is called uniformly bounded if there exists a finite positive constant $\gamma$ such that for any $k_0$ and $y_0$ the corresponding solution satisfies 
\begin{equation}
||\hat y(k)-\hat y_e||_2\leq \gamma ||\hat y(k_0)-\hat y_e||_2, \qquad k\geq k_0
\end{equation}
\end{definition}
In essence, the uniform stability guarantees the ANN output during training $\hat{y}$ does not diverge from the equilibrium point $\hat{y}_e$ to infinity in finite time. The stronger stability condition of exponential stability is defined as,
\begin{definition}
\textbf{Uniform exponential internal stability \cite{rugh}}
The discrete-time ANN training dynamics in eq. ~\eqref{eq:outputODE} with network output $\hat y(k)$, initial prediction $\hat y(k_0)=y_0$, and equilibrium point $\hat y_e$ is called uniformly exponentially stable if there exists a finite positive constant $\gamma$ and a constant $0< \kappa \leq 1$ such that for all $k_0$ and $y_0$

\begin{equation}
\label{eq:LinearizedExpBoundedness}
 ||\hat y(k)-\hat y_e||_2\leq \gamma \kappa^{k-k_0} ||\hat y(k_0) - \hat y_e||_2, \quad k\geq k_0.
\end{equation}
\end{definition}


To verify the above mentioned conditions for generic loss functions by using the global training dynamics is an uneasy task. However, the internal stability conditions of the local training dynamics described in eq. \eqref{eq:linear} may result in simplified conditions. As an example, the stability conditions of local training dynamics with quadratic loss reduce to an eigenvalue condition. As such, internal stability reads as 

\begin{equation}
    \label{eq:stabilityOfMSE}
    |\lambda| \leq 1, \quad det\left(\lambda I-\left(I-\alpha \Theta(k_0) \right)\right)=0 \quad 
    \forall \lambda 
\end{equation}
where $\alpha$ uniformly scales the eigenvalues of the local-empirical NTK, $\Theta(k_0)$. If $\Theta(k_0)$ is positive semi-definite and none of the scaled eigenvalues of $\alpha \Theta(k_0)$ is larger than 1, the inequality in eq.~\eqref{eq:stabilityOfMSE} is strict. Hence, $\alpha$ can be choosen such that the local training dynamics is guarantee to be stable stable. Furthermore, this guarantees in the local sense that the equilibrium output is asymptotically reached. 
In Appendix \ref{appx:boundednessLosses} a concise and loss function dependent derivation of stability analysis is provided for certain common loss functions. 
\begin{Rem} \textbf{Learning rate adaptation}.
Finally, some ANN training algorithms\cite{adaptiveLRTong} suggest altering the learning rate $\alpha$. Intuitively, the learning rate scales the eigenvalues of the local-empirical NTK and as such impacts stability. Modifying the scalar parameter $\alpha$ may help the convergence of the training dynamics.
\end{Rem}   

\begin{Rem} \textbf{Robust stability.}
    The linearization error discussed in Appendix \ref{appx:Lagrangian} can be propagated through eq.\eqref{eq:LinearizedExpBoundedness} to deduce global stability from the local dynamics. This is further discussed in Appendix \ref{appx:boundedness}.
\end{Rem}

\subsection{Reachability}
Reachability is a property of the label augmented training dynamics given in eq.\eqref{eq:outputODEmse}. It verifies the existence of a bounded sequence of the label augments $y_u(k)$ such that any targeted ANN output $\hat{y}(k_f)$ can be reached from $\hat y_0$ within $f$ finite steps. 
\begin{definition} \textbf{Reachability.}
The label augmented training dynamics eq.~\eqref{eq:outputODEmse} is called reachable on $[k_0, k_f]$ if from a given initial state $\hat y(k_0)$ there exists at a sequence of $y_u(k)$ such that any $\hat y (k_f)$ can be reached $k_0<k_f<\infty$. \label{def:reachability}
\end{definition}

From Definition \eqref{def:reachability} the label reachability (with $y(k_f)=y$) can be derived as a specific case of reachability. The reachability condition for the global training dynamics (with input affine label augments) in eq. \eqref{eq:outputODEmse} can be verified using difference-geometric \cite{Isidori1995NL} or set theoretic algorithms \cite{ReachabilityNL}. This also indicates that addressing the reachability question for generic and complex loss functions is hard.

In some specific cases of the loss function (e.g. if the controlled training dynamics is local and the label augments are injected in an input affine way), the reachability analysis is straightforward to perform. Especially, the reachability analysis of quadratic losses and local training dynamics can be concluded by using linear systems and control theory \cite{rugh}. In such cases, we borrow the Popov-Belman-Hautus (PBH) test given by, 
\begin{eqnarray}
rank\begin{bmatrix}
zI-(I-\alpha \Theta(k_0)) & \alpha \Theta(k_0)\end{bmatrix}=rn_L, \quad \forall z\in\mathbb{C}.
\label{eq:reachability}
\end{eqnarray}
Numerically, the PBH condition consists of the finitely many rank tests at the eigenvalues of $(I-\alpha \Theta(k_0))$. 
\begin{Rem} \textbf{Unreachable local training dynamics}.
The importance of reachability in ANN training can easily be captured when it is not full-filled. 
A specific example is if there exist two identical data points. In such a case, the local empirical NTK has two similar rows or columns (see eq. \ref{empiricalNTK}) causing rank deficiency in $\Theta(k_0)$. The intuitive explanation is that two similar or identical data points will yield the same ANN output. Hence these two points are inseparable in ANN output space. In practice, if these data points have the same label the training will be stabilizable (see remark \ref{rem:stabilizability}).   
\end{Rem}

\begin{Rem}\textbf{Overfitting.}
 Intuitively, if reachability is fulfilled, the augmented training can perfectly fit the training data, causing overfitting. Consequently, if reachability is not fulfilled the ANN cannot perfectly fit the data. Hence reachability can be a good measure of whether or not a network is complex enough to fit the data or if data has conflicting data points. Moreover, the overfitting caused by the augmented learning can be remedied with various regularization techniques (e.g. \cite{Vincent2022}).
 \end{Rem}

\begin{Rem} \textbf{Stabilizability.}
\label{rem:stabilizability}
If the local dynamics is not full state reachable but the non-reachable states partition is locally asymptotically stable, we call the training dynamics locally stabilizable.
\end{Rem}

Finally, the above mentioned analytic conditions (stability, reachability, stabilizability) support the deployment of model based and optimal  label augmentation solutions.

\section{Controlled Descent Training - Locally Optimal Control of ANN training dynamics} \label{sec:CDT}

In Section \ref{sec:label augments}, label augments, as new fictitious inputs, have been injected into the training dynamics. In the following section, it is demonstrated how to calculate the label augments $y_u(k)$ such that stability and some optimality criteria are (at least locally) satisfied. The main idea is to use $\hat y_{\vartheta}(k)$ and transform it to $y_u(k)$ with a static gain. In Figure \ref{fig:blockdiagram}, the schematics of the closed-loop and controlled label augmentation for an network trained with MSE is depicted.

\begin{figure}[!htp]
\centering
\includegraphics[trim={0 5.5cm 3cm 0},clip, width=0.8\textwidth]{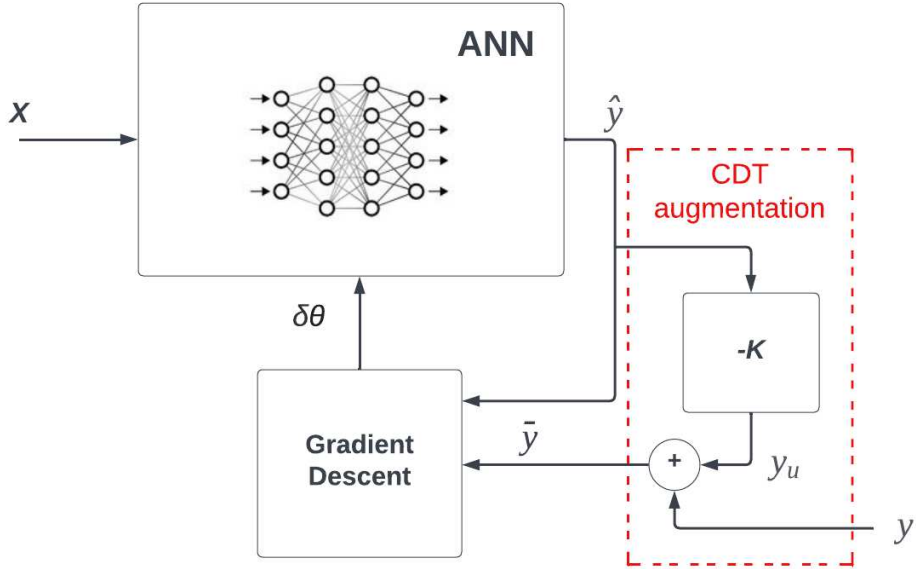}
         \caption{Schematic of Controlled Decent Training. The CDT augmentation block calculates the label augment $y_u$ dynamically from the ANN output $\hat y$ using the controller feedback matrix $K$. The new dynamically augmented label $\bar y=y+y_u$ is fed to GD rather than the static label $y$.}
         \label{fig:blockdiag}
\label{fig:blockdiagram}
\end{figure}
In order to find $K$ (in Figure \ref{fig:blockdiagram}), we propose to use an optimal state feedback label augmentation method.  More precisely, $y_u(k)$ label injection is aimed at \Htwo optimal closed loop training dynamics (CDT). In the following section, we restrict ourselves to quadratic loss functions and assume the augmented training dynamics is stabilizable (or reachable).

The following notation is introduced,
\begin{eqnarray}
 & \tilde{y}(k)  = \begin{bmatrix}
    \hat y_{\vartheta}(k) \\ 1
\end{bmatrix},\\
& 
\end{eqnarray}

This allows the standard infinite horizon cost to account for the offset introduced by the labels $y$. More precisely, the following infinite horizon cost is minimized according to
\begin{eqnarray}
& \min_{y_u} \frac 12 \sum _{i=k_0}^{\infty}\tilde y(i)^T \tilde Q\tilde y(i)+ y_u(i)^TRy_u(i) \label{eq:LQC1} \\
& s.t. \ \tilde{y}(k+1) = \begin{bmatrix}     I-\alpha\Theta(k_0) & \alpha\Theta(k_0) y \\
     \mathbf{0} & 1
\end{bmatrix}\tilde{y}(k) + \begin{bmatrix}
     \alpha\Theta(k_0)\\ \textbf{0}
 \end{bmatrix}y_u(k) \label{eq:LQC2} \\
\label{eq:LQC}
\end{eqnarray}

where $\tilde Q\in\mathbb{R}^{r(n_L+1)\times r(n_L+1)}$ and $R\in\mathbb{R}_+^{rn_L\times rn_L}$ are real valued positive semi-definite and positive definite weighting matrices, respectively. More precisely, 
\begin{eqnarray}
\tilde Q=\begin{bmatrix} Q & -Qy\\ -y^TQ & y^TQy \end{bmatrix}.
\end{eqnarray}
The cost includes the weighted squared error between the ANN predictions according to the linear dynamics and the targets, as well as the weighted square sum of the label augment $y_u$. The weighting matrix $Q\in\mathbb{R}^{rn_L\times rn_L}$ can be chosen such that certain data points or ANN outputs are more important than others.  Moreover, the local training dynamics in eq.~\eqref{eq:LQC2} captures learning interactions between data points in $x$ which in turn influence the optimal solution. The optimization problem, if solved, delivers an \Htwo optimal label augmentation solution. The cost function in eq.~\eqref{eq:LQC1} describes a generic energy approach to label augment selection where the weighting matrices $Q,R$ shape their relative importance. Finally, the first term in eq.~\eqref{eq:LQC1} penalizes the deviation from the static targets.

The locally stabilizing and optimal solution to eq.~\eqref{eq:LQC1}-\eqref{eq:LQC} can be found by using the Discrete-time Algebraic Riccati equations (DARE)  \footnote{Stabilizability and detectability conditions must hold \cite{kwakernaak1972linear}}(see Appendix \ref{appx:DARE} for solution derivation). If the stationary and extremal solution to DARE is $P$ then the optimal feedback gain can be written as 
\begin{eqnarray}
 K&=&
\left (R+\begin{bmatrix}
     \alpha\Theta(k_0)\\ \textbf{0}
 \end{bmatrix}^TP\begin{bmatrix}
     \alpha\Theta(k_0)\\ \textbf{0}
 \end{bmatrix} \right)^{-1}\begin{bmatrix}
     \alpha\Theta(k_0)\\ \textbf{0}
 \end{bmatrix}^TP\begin{bmatrix}     I-\alpha\Theta(k_0) & \alpha\Theta(k_0) y \\
     \mathbf{0} & 1
\end{bmatrix} \label{eq:control-feedback}\\
y_u(k)&=&-K\tilde{y}(k) \\ 
\bar y(k)&=&y_u(k)+y=-K\tilde y(k)+y\\
\end{eqnarray}



The closed and CDT controlled loop becomes  
\begin{eqnarray}
\begin{bmatrix}
    \hat y_{\vartheta}(k+1)\\1
\end{bmatrix}=\left(\begin{bmatrix}     I-\alpha\Theta(k_0) & \alpha\Theta(k_0) y \\
     \mathbf{0} & 1
\end{bmatrix} - \begin{bmatrix}
     \alpha\Theta(k_0)\\ \textbf{0}
 \end{bmatrix}K \right)\begin{bmatrix}
    \hat y_{\vartheta}(k)\\1
\end{bmatrix}.
\label{eq:cloop}
\end{eqnarray}

\noindent The feedback gain matrix $K$ maps the ANN output $\tilde y$ to \textit{target augments} $y_u(k)$ such that it minimizes the cost in eq.~\eqref{eq:LQC} on an infinite horizon. Note, the optimal cost value with the state feedback policy is $\frac 12 \tilde y(k_0)^TP\tilde y(k_0)$.
Finally, the controller gain $K$ can be calculated before training and remains constant during training.
In the local dynamical sense, the linear difference equation in eq. \eqref{eq:cloop} guarantees asymptotic stability, and therefore convergence. 
\begin{Rem} \textbf{Batch}.
CDT suggests using the local empirical NTK for the whole training dataset (megabatch). In practice, it may be more attractive with a traditional mini-batch approach, recalculating the NTK and feedback controller for each batch. This would call for receding horizon optimal control. 


\end{Rem}
\begin{Rem} \textbf{Robustness}.
The proposed \Htwo state feedback control policy is robust with a guaranteed magnitude \cite{kwakernaak1972linear}. This makes CDT applicable on the global training dynamics in practice. However, for proper handling of the modeling error between the global and the local training dynamics, robust control methods are proposed.
\end{Rem}

\subsection{The CDT algorithm}
In previous sections, the concepts of reachability and stability were introduced for ANNs and their implications on hyper-parameter selection examined. The optimal target augment sequence $\bar{y}(k)$ was calculated using LQR such that stability is guaranteed and convergence rate improved.
The full CD training algorithm for MSE is summarized in Algorithm \ref{alg:cdt}.

\begin{algorithm}
\caption{CDT summary}\label{alg:cdt}
\begin{algorithmic}
\State 1: \quad Calculate $\Theta_0 \gets$ eq.\eqref{eq:kernel} \Comment{Calculate kernel at ANN initialization}
\State 2: \quad Check stability with eq.\eqref{eq:stabilityOfMSE}
\Comment{(Stability)}
\State 3: \quad Check reachability with eq.\eqref{eq:reachability}
\Comment{(Reachability)}

\State 4: \quad Calculate $K \gets$ eq.\eqref{eq:control-feedback} \Comment{Calculate control feedback using DARE} \\
\textbf{ANN Training:}
\For{$k\in1,2,...,$ \textit{training steps}}
    \State 1: \quad Calculate $\bar y(k) \gets y-K\tilde y(k)$ \Comment{Calculate label augments based on feedback error}
    \State 2: \quad Update parameters $\theta(k+1) \gets \theta(k) - \delta\mathcal{L}(\hat y(k), \bar y(k))/ \delta \theta(k)$ \Comment{Update parameters according to GD}
\EndFor
\end{algorithmic}
\end{algorithm}

\section{Experiments}

In this section, traditional gradient descent (GD) and CDT are compared numerically using two standard benchmarking datasets. The first example is a regression problem using the Ames Housing dataset \cite{regdataset} with a single-target fully connected ANN and MSE loss. 
The second example is a binary image classification problem using ALEXNet\cite{alexnet} for the purpose of demonstrating the applicability of CDT on Convolutional Neural Networks (CNN). 
Both experiments run on a megabatch setup, meaning the data is shuffled and split into validation and train datasets with all train data in a single batch. The loss is averaged over the batch. Each model is trained 10 times with reshuffled data and a new random initialization. For each dataset, the model is trained using both optimization methods for a number of learning rates all with learning rate decay according to 
\begin{eqnarray}
    \alpha_k = \frac{1}{1+0.01k}\alpha
\end{eqnarray}
where $k$ is the training step and $\alpha$ is the initial learning rate. The learning rate decay is not modeled in the training dynamics to ensure the controller does not compensate for the decay by scaling the system. The controller design cost matrices $Q$ and $R$ in eq.\eqref{eq:LQC} are chosen as scaled diagonal matrices,
\begin{eqnarray}
    Q = I_1 \\
    R = pI_2
\end{eqnarray}
with identity matrices $I_1 \in \mathbb{R}^{rn_L \times rn_L}$ and $I_2 \in \mathbb{R}^{r \times r}$ and a pre-selected control input cost $p$ \footnote{These penalties weights are tuning parameters}. Smaller values of be $p$ give larger label augment values.
 In the following experiments, $p$ is a constant ($p=0.1$) in order to demonstrate that this design parameter is significantly less sensitive than learning-rate $\alpha$.  Note however that choosing $p$ will influence performance. There are multiple heuristics involving the choice of $p$, any of which can be used to yield even lower validation loss. However, this paper focuses on demonstrating the applicability of the method and theory, rather than performance improvement. 
The experiments are written in Python (3.7) using the latest version (1.9.1) of PyTorch released by Facebooks AI Research Lab 2016.

\subsection{Regression}

The Ames Housing Price dataset contains 79 explanatory variables describing houses in Iowa along with their final sale price. The regression target for this dataset is the sale price. A description for each variable can be seen in \cite{regdataset}. The full dataset has 2919 entries. For the experiments, 512 data points are sampled without replacement, normalized around 0 and split into 70\% training and 30\% validation data. The experiments are run on a mega batch setup meaning all training data is run concurrently in a single batch. \footnote{For traditional mini-batch gradient descent the control scheme would be recalculated for each batch, analogous to a receding horizon controller or MPC. For the purposes of this paper, the mega-batch setup better demonstrates the theory presented.}

\subsubsection{Architecture}
For the regression experiment 3 ANN architectures similar to the model description used in \cite{googlepaper} is used with initializations given in Appendix \ref{appx:init}. I.e. a fully connected feed-forward neural network setup is used according to,
\begin{eqnarray}
    \begin{cases}
        h^{l+1} &=z^l W^{l+1} + b^{l+1} \\
        z^{l+1} &= \phi^l(h^{l+1})
    \end{cases}
    \qquad
    \begin{cases}
        W^l_{i,j} &= \frac{\sigma_w}{\sqrt{n_{l}}} w^l_{i,j} \\
        b^l_j &= \sigma_b \beta^l_j
    \end{cases}
\end{eqnarray}
where $l<L\in \mathbb{N}$ is the layer where $L$ is the final layer, $n_l$ is number of input features to the layer $l$, $z^0$ is the input data $x\in\mathcal{D} \subseteq \mathbb{R}^{n_0\times r}$ and $r$ is the batch size. $w_{i,j}$ and $\beta_j$ is the weight and bias where $i=1,\hdots,n_l$ and $j=1,\hdots,n_{l+1}$. $W^l$ and $b^l$ are the matrix and vector describing the weights and bias of a layer respectively. $\phi_l$ is the output activation function for layer $l$, hence $h$ is the output from layer $l$ and $z$ is the input to the next layer $l+1$. For the regression set up no final activation $\phi^L$  function is used hence $z^L = h^L$. Finally $\hat{y}=z^L$ is the ANN output. The weights and biases are initialized with a normal distribution respectively $\mathcal{N}(0, \frac{\sigma_w^2}{n_l})$ and $\mathcal{N}(0, \sigma_b^2)$. We define a parameter vector as
\begin{eqnarray}
    \theta = \begin{bmatrix} 
    vec(W^L) & b^L & \hdots & vec(W^1) & b^1
    \end{bmatrix}^T
\end{eqnarray}
where $vec$ means concatenated vector form of the matrix. $\hat{y}(\theta,x) : \mathbb{R}^{n_0\times r} \xrightarrow{} \mathbb{R}^r$ is the output of the ANN using input data $x$ and parameters $\theta$.

The fully connected ANN architectures (of varying widths and depths) used for the Housing price dataset can be seen in Table \ref{tab:st-arch}. All architectures use ReLU as inter layer activation function with no final activation. This ensures the results are not architecture dependent and demonstrate how CDT is influenced by varying widths and depths.

\subsubsection{Regression experiment}

The experiment is run 10 times with different initializations, reshuffled training data, and validation indices for each architecture. Tables comparing the analytical and observed properties of the two training algorithms can be seen in Appendix K \footnote{In accordance to the Journal guidelines provided.} Tables \ref{table:a1} to \ref{table:a3}. 
In the aforementioned tables, $\alpha$ is the learning rate. Final validation loss is the average model performance over all initializations on the validation data at the final training iteration. For the purpose of demonstration, if some but not all ANN initializations resulted in divergent training the average loss over all non divergent initializations is indicated. The convergence column describes how many initializations resulted in non divergent training ($\hat y$ does not tend toward infinity). $|eig(\Theta(k_0))|<1$ describes the open-loop local stability of training for all initializations. 
Reachability describes the reachability of training for all initializations.  
Figures \ref{fig:a1_relative} to \ref{fig:a3_relative} show the average difference  ($\mathcal{L}_{GD}-\mathcal{L}_{CDT}$) of MSE validation loss between GD and CDT during training over all initializations for each architecture.  The relative difference between the two validation losses is always in favor of CDT (the metric is never $<0$) hence Figures \ref{fig:a1_relative} to \ref{fig:a3_relative} are shown in $log_{10}$ scale. Only learning rates where both CDT and GD converge for all initializations are shown in the figures. The absolute MSE losses for each architecture and learning rate can be seen in Figures \ref{fig:a0_1} to \ref{fig:a2_0001} in Appendix \ref{appx:figs}.

\begin{figure}[h]
    \hfill
     \begin{subfigure}{0.33\textwidth}
         \centering
         \includegraphics[width=\textwidth]{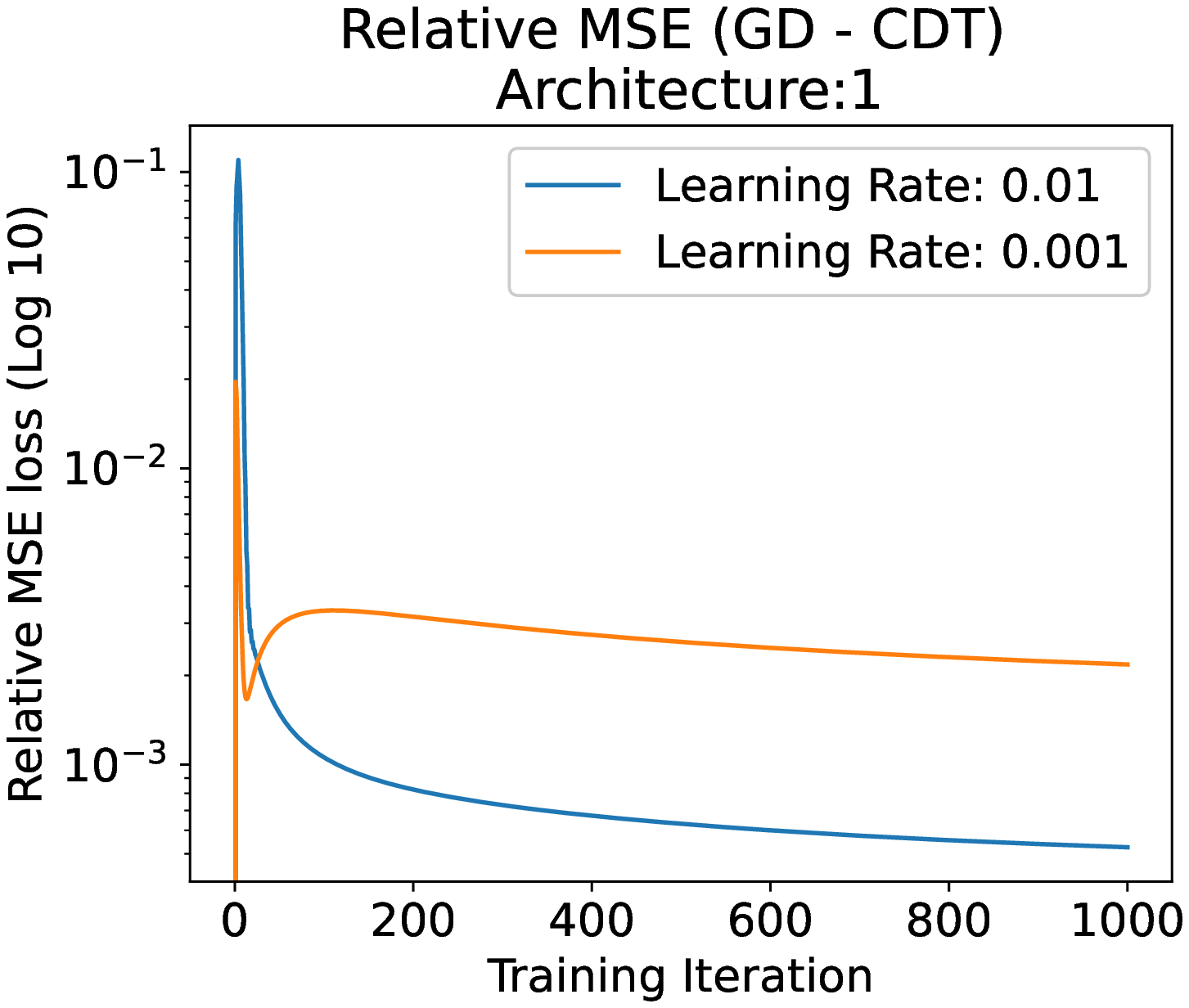}
         \caption{Architecture 1}
         \label{fig:a1_relative}
     \end{subfigure}
     \hfill
     \begin{subfigure}{0.33\textwidth}
         \centering
         \includegraphics[width=\textwidth]{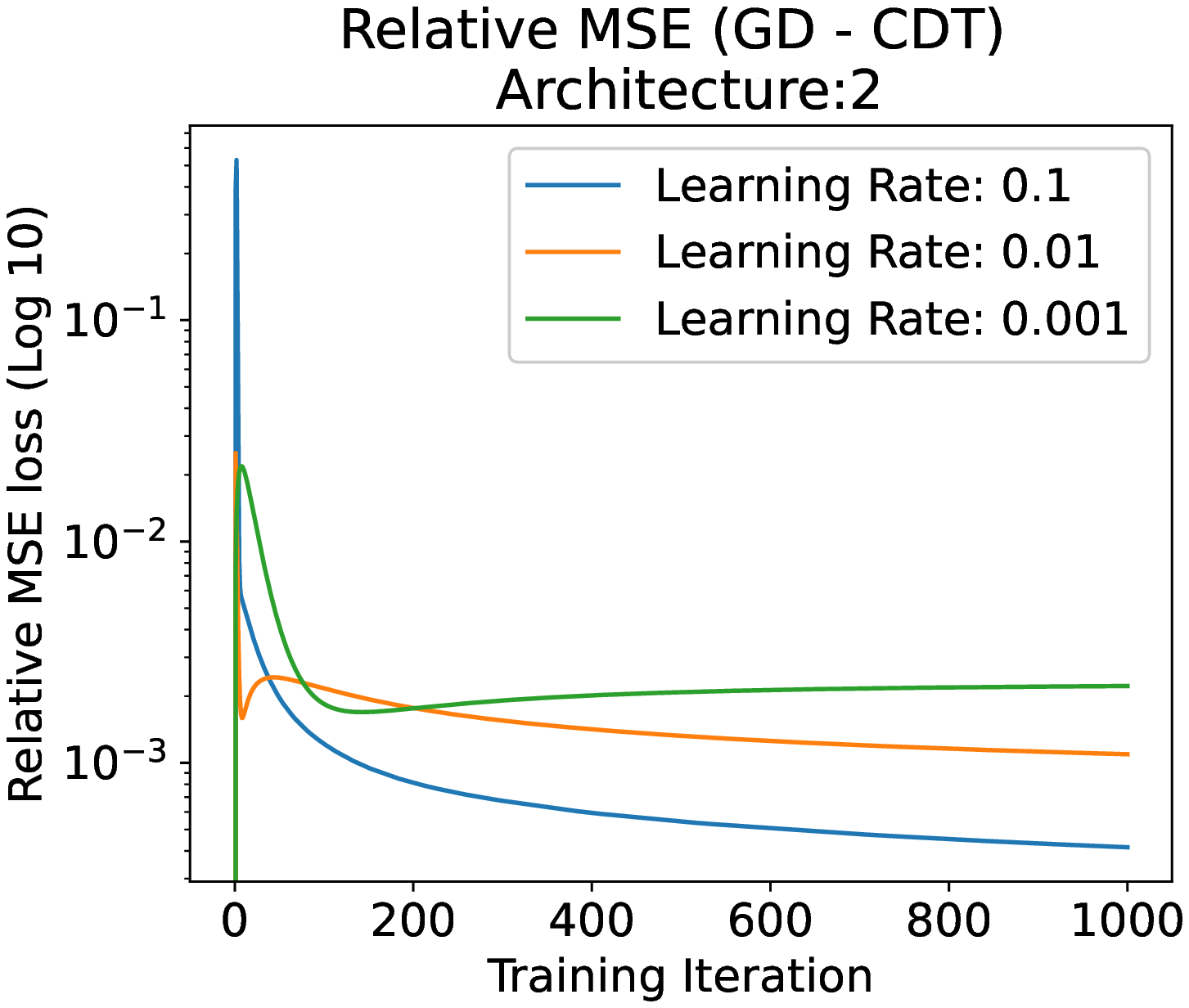}
         \caption{Architecture 2}
         \label{fig:a2_relative}
     \end{subfigure}
    \centering
      \begin{subfigure}{0.33\textwidth}
         \centering
         \includegraphics[width=\textwidth]{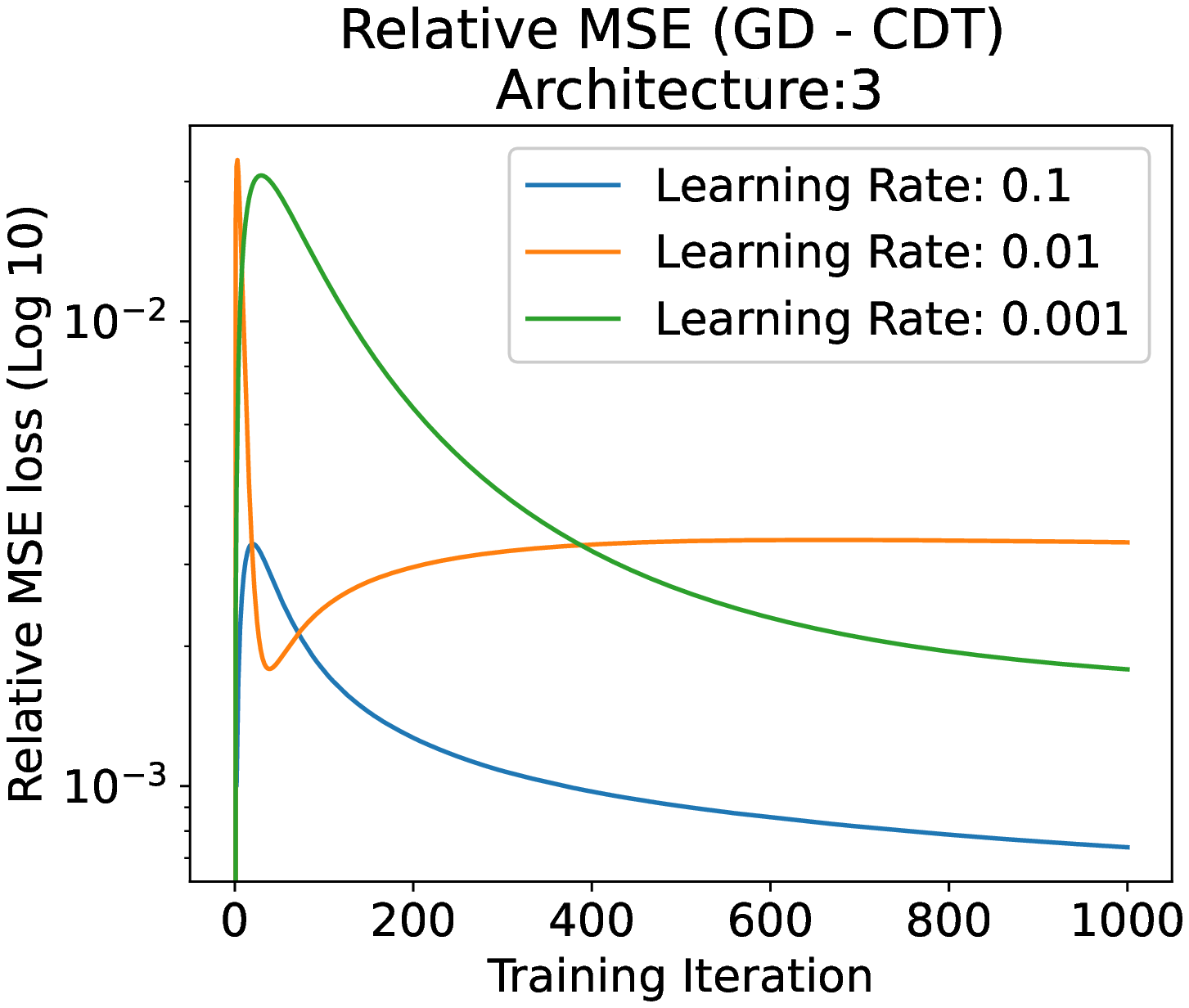}
         \caption{Architecture 3}
         \label{fig:a3_relative}
     \end{subfigure}
     \caption{Average relative difference between GD and CDTs ($\mathcal{L}_{GD}-\mathcal{L}_{CDT}$) MSE validation loss for all learning-rates where both training methods converge for each of the three regression architectures. CDT has a lower validation loss during the entirety of training for all learning rates and architectures.}
     \label{fig:relative}
\end{figure}

\begin{figure}
\centering
         \begin{subfigure}{0.32\textwidth}
         \centering
         \includegraphics[width=\textwidth]{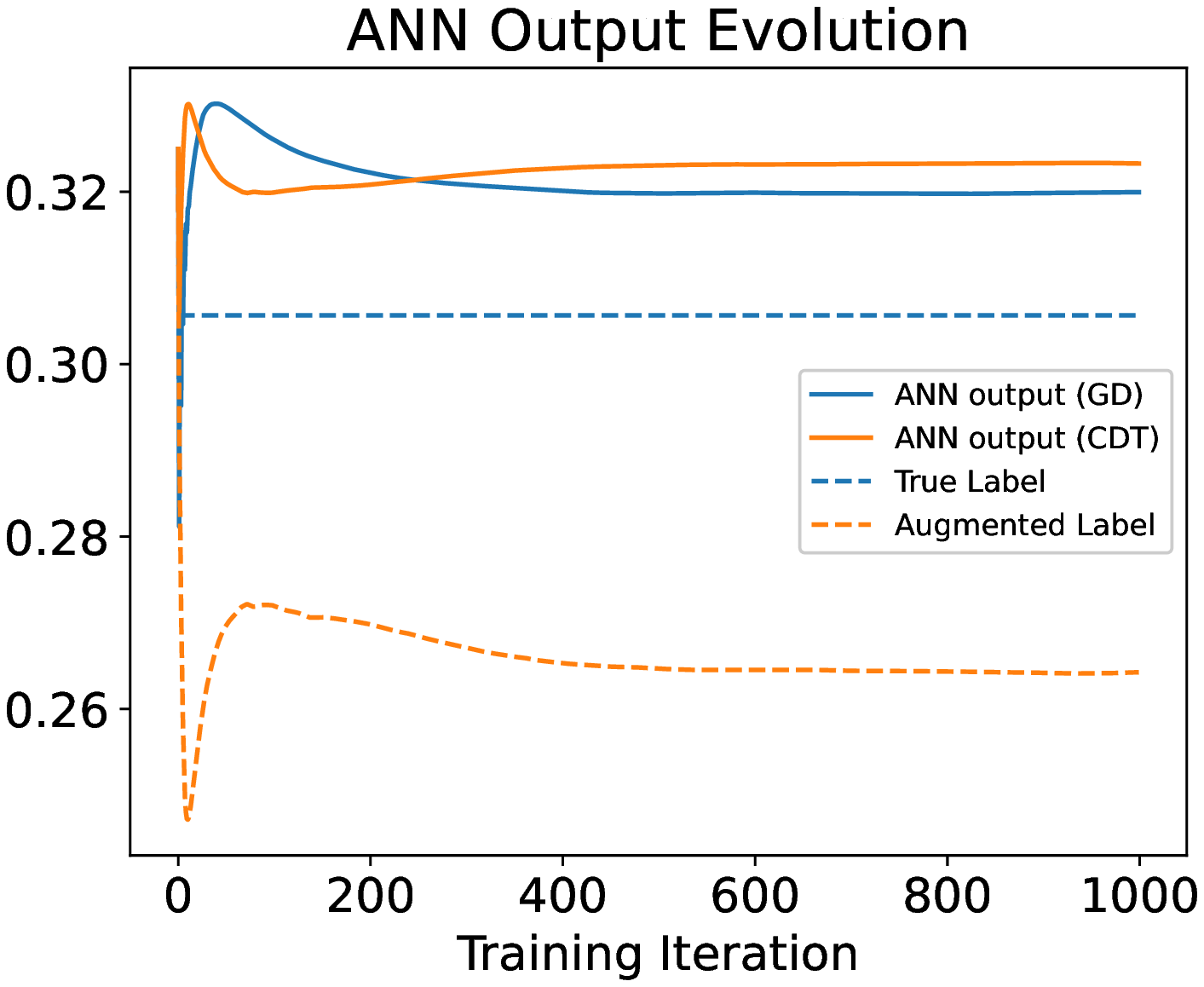}
         \caption{Sample 1}
         \label{fig:s1}
     \end{subfigure}
     \begin{subfigure}{0.32\textwidth}
         \centering
         \includegraphics[width=\textwidth]{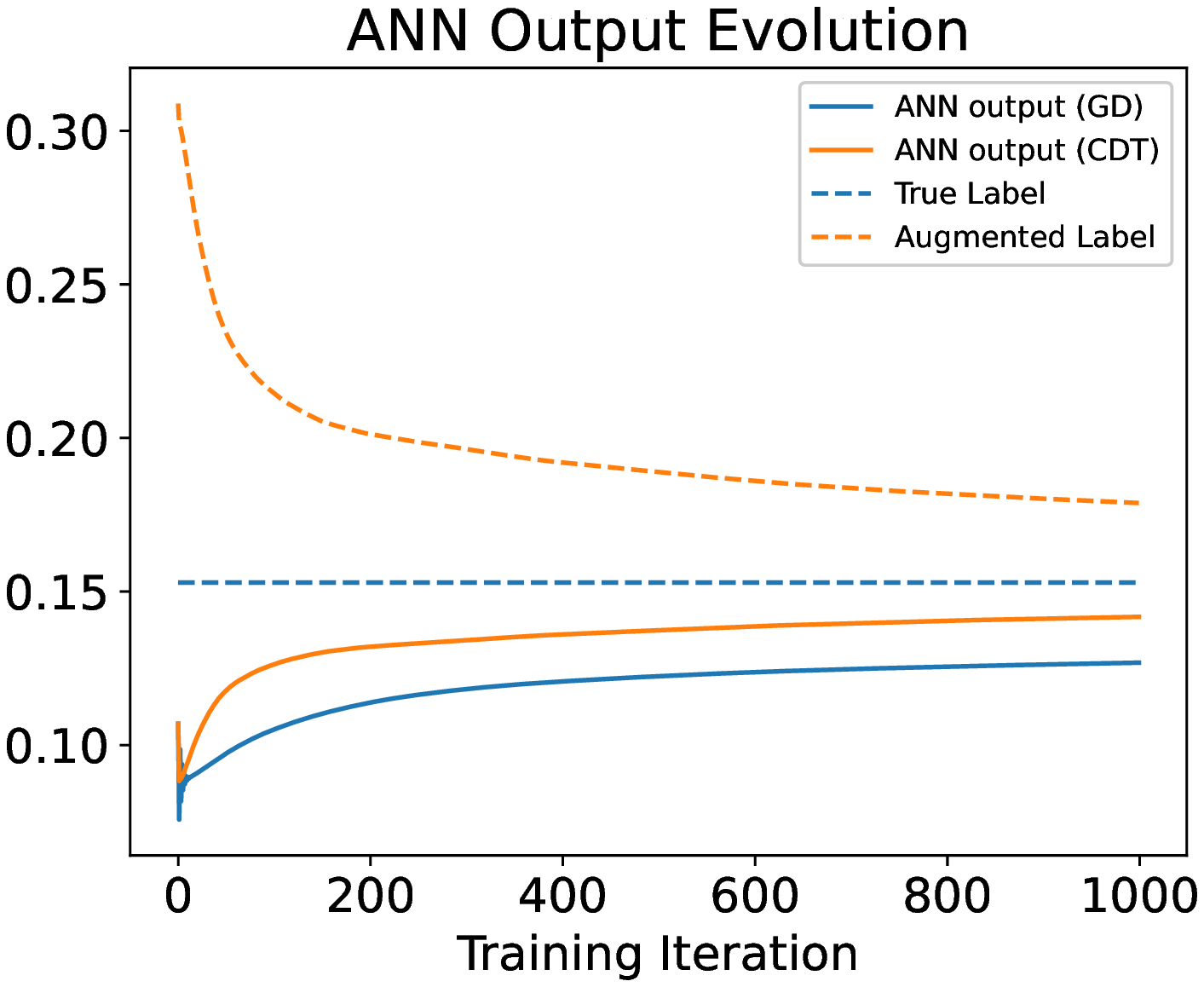}
        \caption{Sample 2}
         \label{fig:s2}
     \end{subfigure}
     \begin{subfigure}{0.32\textwidth}
         \centering
         \includegraphics[width=\textwidth]{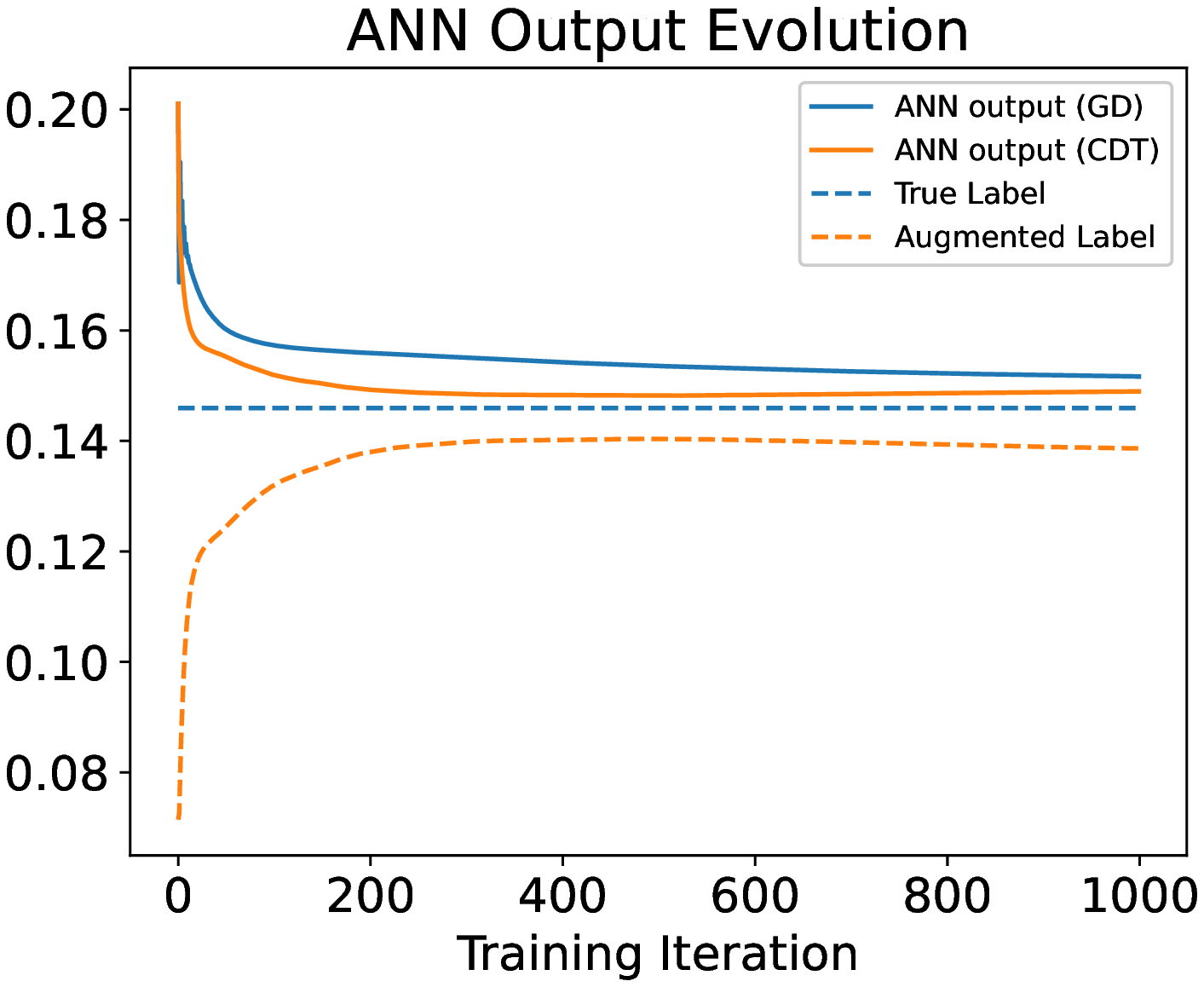}
         \caption{Sample 3}
        \label{fig:s3}
     \end{subfigure}
     \caption{ANN outputs $\hat y$, true label $y$ and augmented label $\bar{y}$ evolution for random samples when training under CDT and GD. First initialization of architecture 1 with learning rate 0.01. CDT appear to converge closer to true label $y$ for most, but not all data samples in the training batch.}
    \label{fig:samples}
\end{figure}

As can be seen in Table \ref{table:a1} to \ref{table:a3}, CDT is more robust to higher learning-rates with competitive final MSE validation loss while SGD diverges to infinity. CDT consistently converges to a lower loss for all architectures and learning rates. Moreover, the CDT standard deviation is smaller than for GD hence the augmented training is more consistent between initializations and data shuffles. The difference between the highest and lowest final MSE loss is consistently smaller for CDT and the performance only changes significantly for very low learning rates. Hence, CDT is seemingly less affected by choice of learning rate $\alpha$ than traditional GD. This behavior is expected as the controller may scale the system as required. It can be seen in Table \ref{table:a1} that architecture 1 trained with CDT does not diverge for any initialization at the highest learning rate $\alpha=1.000$ but converges further away from the true labels than at initialization. Due to the high learning rate and relatively few parameters in the single hidden layer architecture, the true kernel $\Theta(k)$ changes rapidly making the global dynamics drift from the local approximation $\Theta(k_0)$. Note however that despite this CDT does not diverge to infinity.

Regarding observed global convergence it can be seen in Tables \ref{table:a1} to \ref{table:a3} that for some learning rates, the local stability condition is not fulfilled. Despite this, both GD and CDT are observed to converge to the true labels. This hints at the higher order interactions not modeled by the first order Taylor approximation improves robustness and does not cause divergent training. This requires more analysis to confirm and is left for future work. Furthermore, the reachability condition is always fulfilled for all architectures. The dataset provided is clean and thoroughly examined for duplicates and other issues hence loss of reachability resulting from duplicated data points is not an issue.


Figure \ref{fig:a1_relative} to \ref{fig:a3_relative} demonstrate that CDT converges in fewer iterations compared to GD. The difference in performance is largest at the start of training, meaning CDT has already converged to a lower loss than GD during early iterations. Figures \ref{fig:a0_01} to \ref{fig:a2_0001} in Appendix \ref{appx:figs} highlight this further. 

Figure \ref{fig:samples} show the ANN output evolution for 3 randomly selected samples and a single initialization under both training methods along with the augmented $\bar y(k)$ and static $y$ labels. Not that in Figure\ref{fig:s1} CDT converges further away from the true label than GD. For this initialization, the ANN trained with CDT is closer to the true labels for 253 out of 357 samples in the training batch at the final iteration. Since CDT gives a lower average loss and outputs closer to the true targets on most samples but not all, it can be concluded that some data samples are prioritized by the CDT method while others are not. As stated previously the empirical kernel $\Theta(k_0)$ is a matrix describing the effect of each sample on all other samples during training\cite{jacot}. Hence the CDT algorithm will prioritize samples with a large effect on others such that minimal loss is achieved. Since $K$ is a static linear transform of the ANN output as $\hat y$ approaches the true static labels $y$ the augmented label $\bar y$ converges to the true static label $y$.

\subsection{Classification experiments}
The Microsoft research Cats vs. Dogs dataset \cite{binarydataset} contains 25k images depicting cats and dogs equally distributed. However, for the demonstrative purposes of this work, a subset of 256 images are sampled without replacement. In order to verify the generalization properties of CDT 70\% of the data is placed in the validation set. The random sampling makes no distinction between the classes, therefore the sampled dataset is not balanced between the classes. Each image is resized to $96x96$ pixels with all color channels retained. The ALEXNet \cite{alexnet} CNN architecture is used for this dataset. This architecture is very complex compared to the previous regression example hence overfitting is expected. For the purposes of demonstration, the CNN is trained with multi-target MSE rather than the standard cross-entropy loss.\footnote{We do this for two reasons; (1) it is more closely connected to the theory presented which is the main focus of this paper, and (2) it is easier to verify the theory applicability on CNNs with multiple outputs without additional linearizations.} Figures \ref{fig:alexnet_1} to \ref{fig:alexnet_0001} show the MSE validation loss evolution of both CDT and GD for the different learning rates.

\begin{figure}[!htp]
    \centering
         \begin{subfigure}{0.34\textwidth}
         \caption{}
         \centering
         \includegraphics[width=\textwidth]{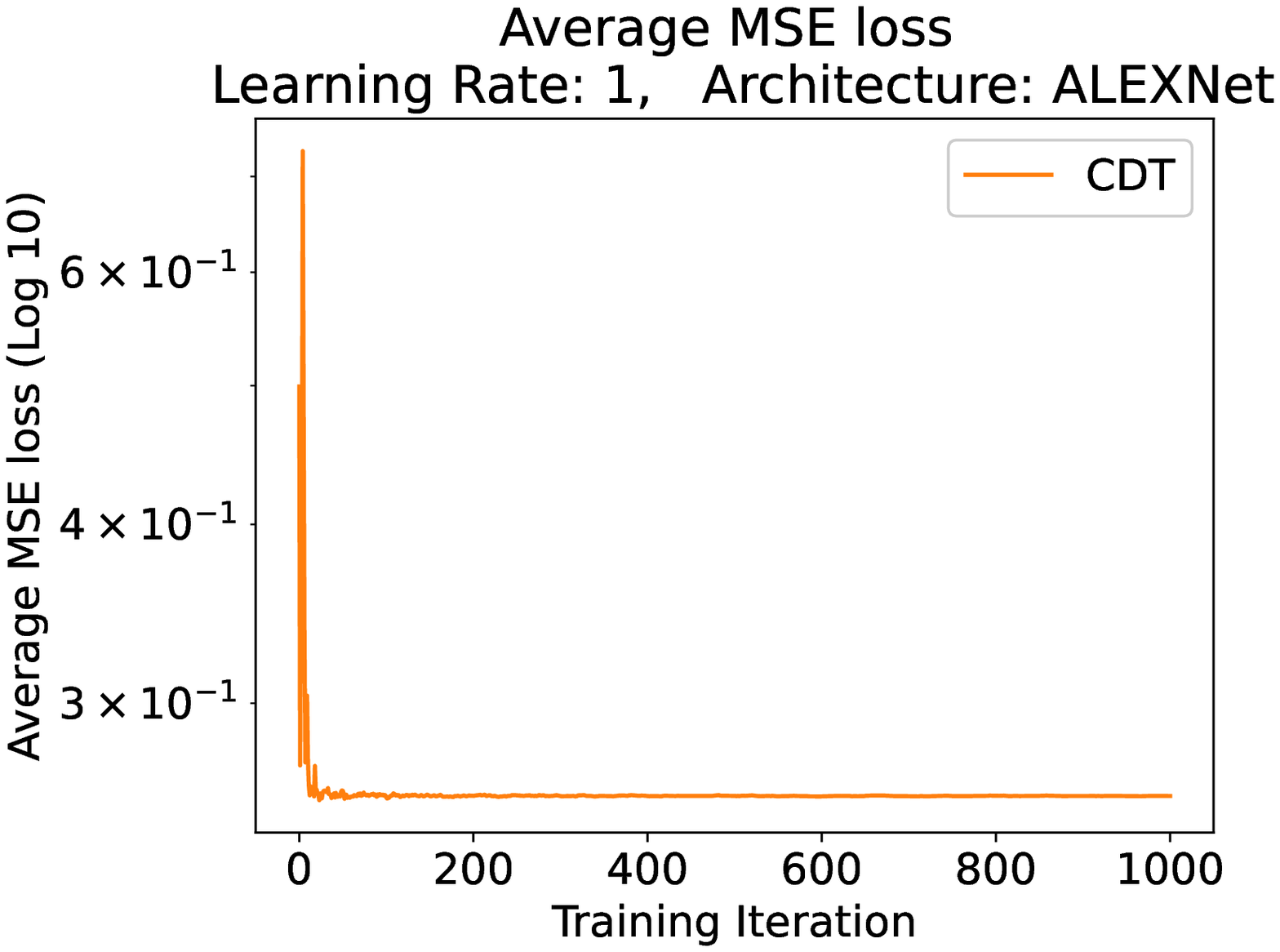}
         \label{fig:alexnet_1}
     \end{subfigure}
     \begin{subfigure}{0.34\textwidth}
         \centering
         \caption{}
         \includegraphics[width=\textwidth]{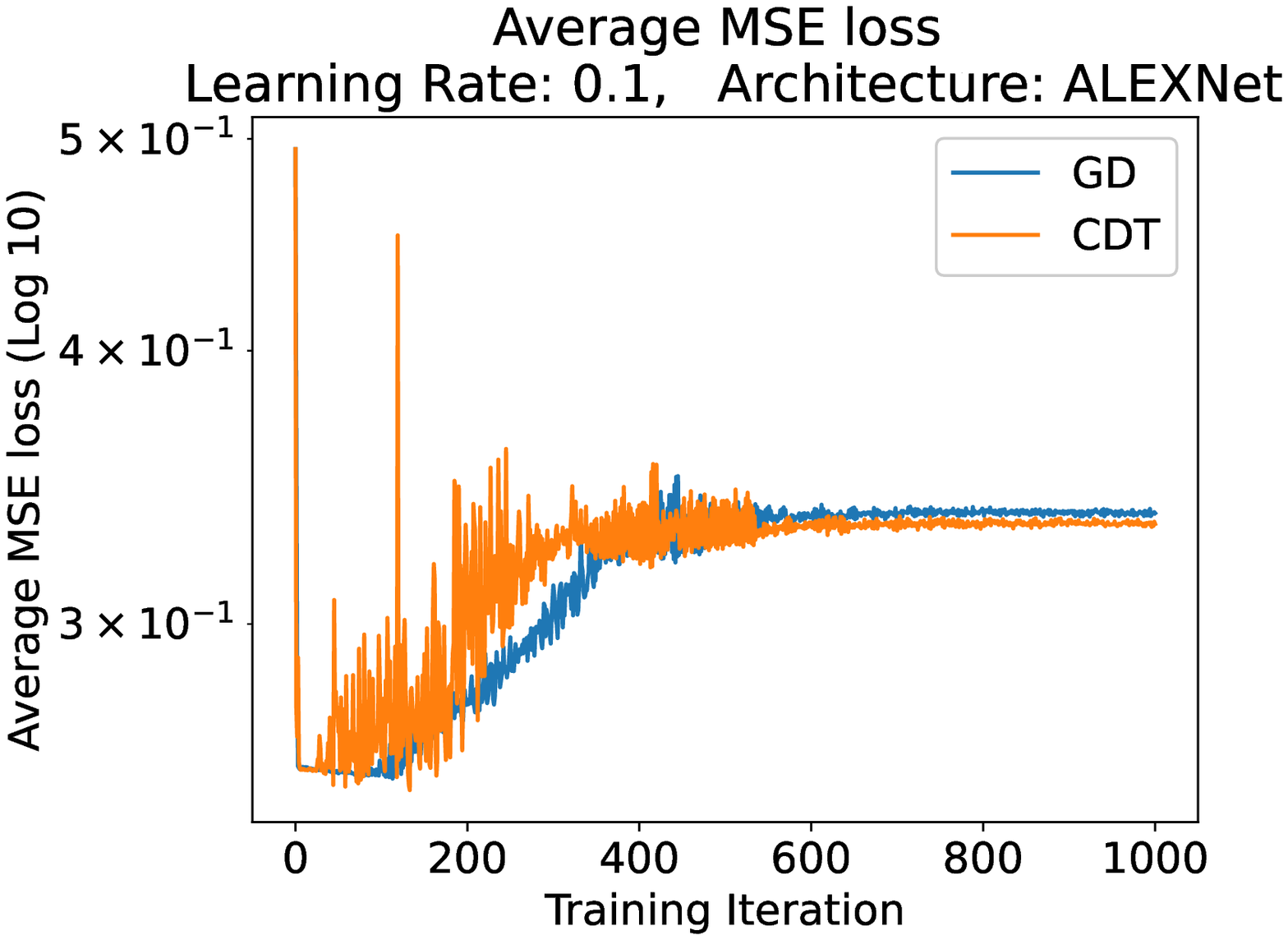}
         \label{fig:alexnet_01}
     \end{subfigure}
      \begin{subfigure}{0.34\textwidth}
         \centering
         \caption{}
         \includegraphics[width=\textwidth, clip]{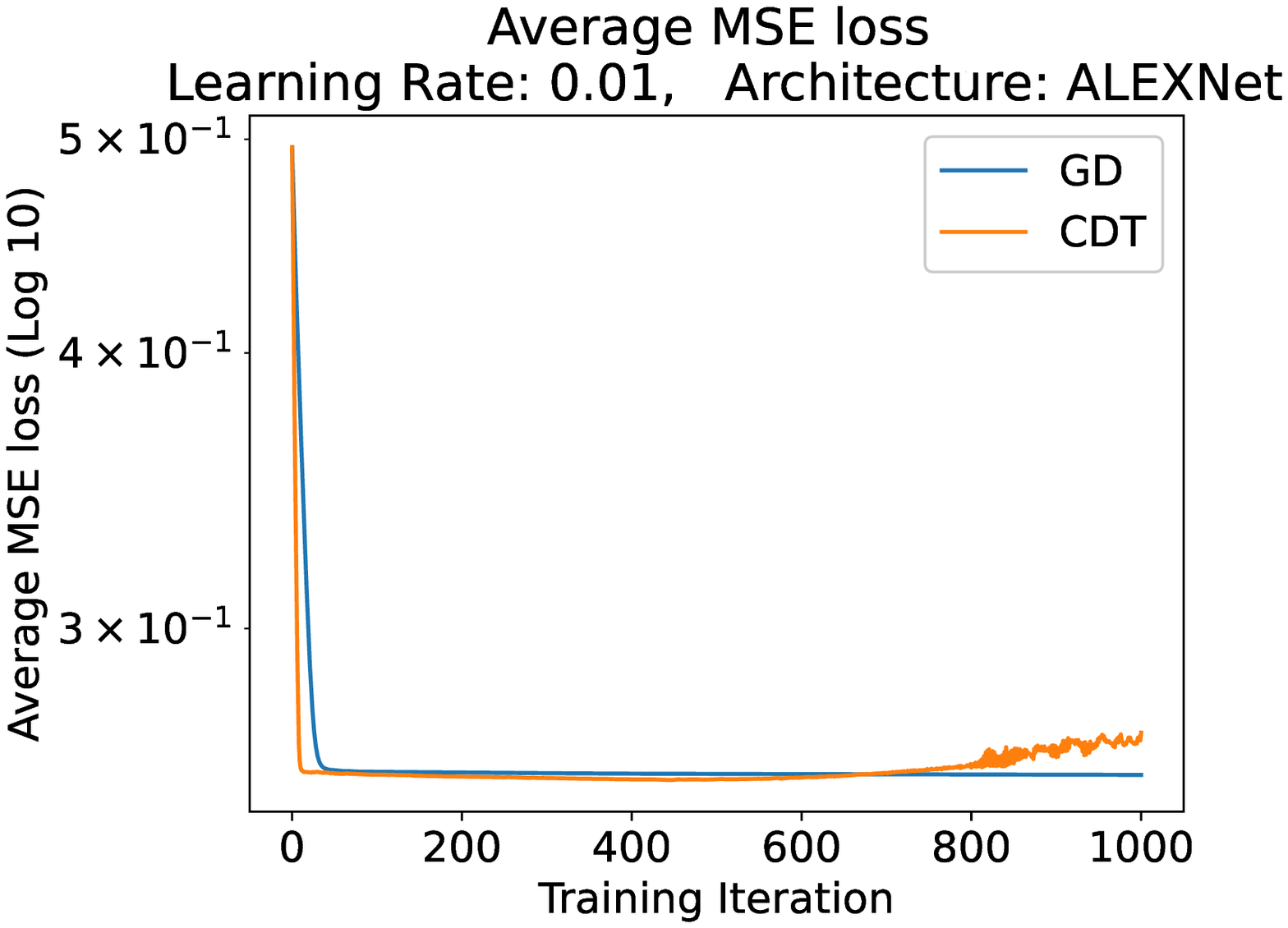}
         \label{fig:alexnet_001}
     \end{subfigure}
     \centering
      \begin{subfigure}{0.34\textwidth}
         \centering
         \caption{}
         \includegraphics[width=\textwidth, clip]{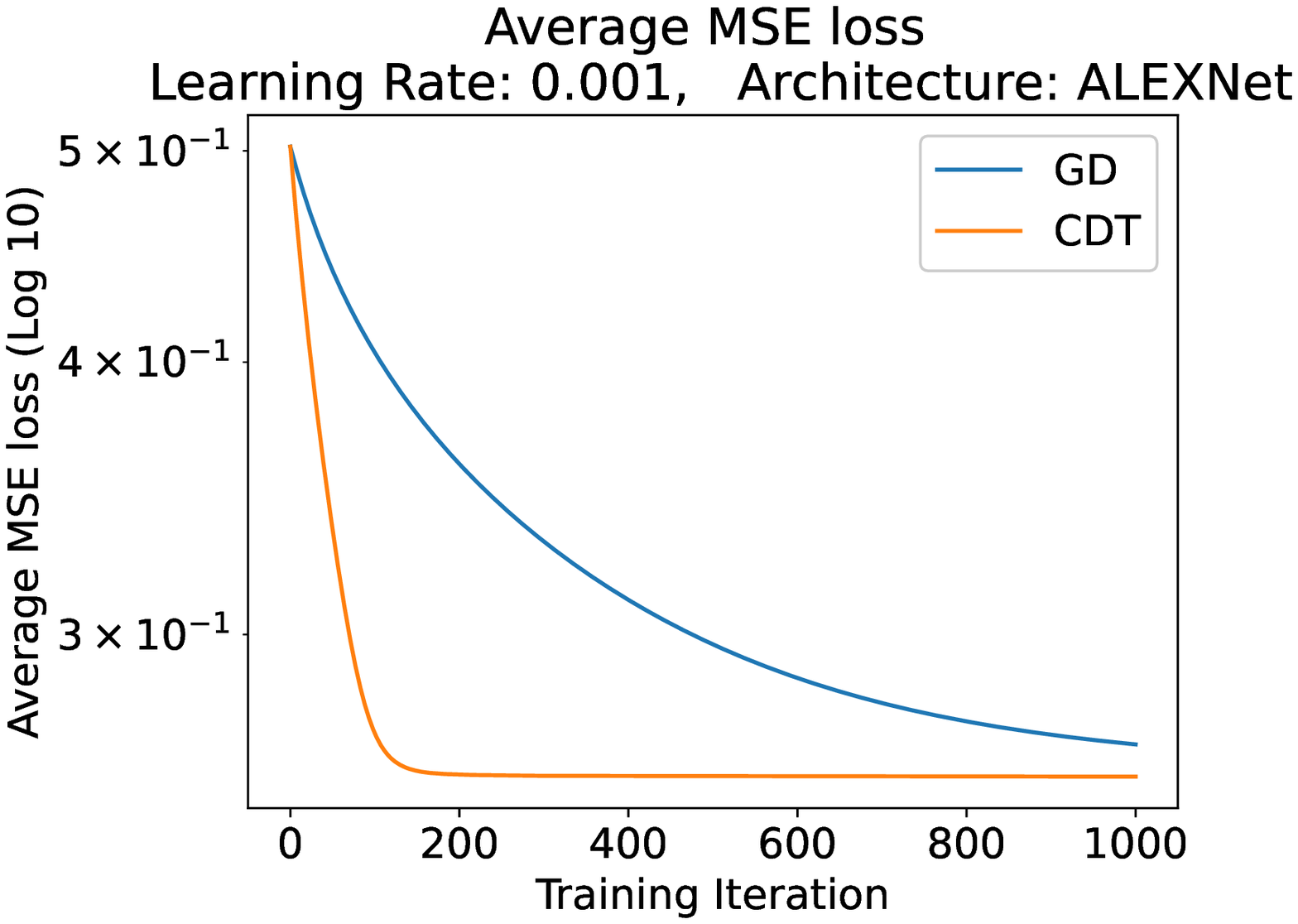}
         \label{fig:alexnet_0001}
     \end{subfigure}
     \caption{Absolute MSE loss on validation data for ALEXNet during training with GD and CDT, averaged over all 10 initializations and data shuffles of classification dataset. As can be seen in b, overfitting occurs for high learning rates $\alpha$. c demonstrate overfitting occuring earlier for CDT due to the acceleration of training.} 
\end{figure}

As can be seen in Table \ref{table:alex}, CDT improves training robustness for CNNs with multiple outputs at higher learning rates. Due to the small training batch size, the performance is poor for both models. ALEXNet is a complex network and will easily overfit the training data. As can be seen in Figure \ref{fig:alexnet_0001}, CDT accelerates learning for CNNs as well as ANNs for low learning rates. However, Figure \ref{fig:alexnet_01} demonstrates that both training algorithms overfit quickly for high learning rates. CDT however stabilizes at a lower loss, indicating higher generalizability after many iterations. More robust experimentation is required to verify this observation. As can be seen in Figure \ref{fig:alexnet_001} CDT converges at a few iterations and reach a lower loss than GD but does however overfit earlier than GD. The observed behavior is expected as CDT accelerates training hence overfit sooner. This hints at using a regularization method together with CDT for optimal performance when using complex network architectures.


\section{Conclusion}
In this paper, a novel model-based control approach to train ANNs under Gradien Decent is proposed. 
The method uses the notion of empirical Neural Tangent Kernels (NTK) of ANN training under gradient descent as a model. After analyzing some baseline properties of the model (solvability, stability), a new fictitious label input is created. Label augments equip the training dynamics with dynamically manipulable and artificial labels. 
These labels give rise to explicit control of the ANNs training behavior. 

The newly developed method of Control Decent Learning hence directly manipulates the label augments whilst being (locally) convergent. In other words, CDT has a locally optimal training behavior via solving a \Htwo optimal control problem.

This novel method is demonstrated to improve loss convergence rate for both known CNN architectures and fully connected ANNs with varying widths and depths. Furthermore, CDT gives local convergence guarantees to target labels increasing robustness of ANN training. The stability analysis of ANN training uncovered the effect choice of learning rate has on local ANN training convergence in the upper bound. Reachability is shown to be a good metric for data learnability from the perspective of the chosen ANN architecture. However, CDT and the reachability analysis demonstrated that due to the accelerated training, overfitting is a larger issue for the novel training method. Therefore CDT should be deployed in conjunction with a regularization method to mitigate this effect.
 

We demonstrated that the theoretical framework of dynamical system theory is directly applicable to ANN training. CDT unlocks the potential to develop additional model-based training solutions. This work is merely the first step in finding a comprehensive description of ANN training suitable for Control Theory applications. We invite the community to further investigate ANN training behavior informed by the NTK from the perspective of dynamical systems and control theory.   

\section{Acknowledgement}
The authors gratefully acknowledge the support of the project OCTON 1, 2 at Chalmers University of Technology. This work was supported in part by the Transport Area of Advance, at Chalmers University of Technology. Moreover, the project was carried out in collaboration with and is supported by Centiro Solutions, a logistics software company based in Sweden.

None of the authors have any conflicts of interest to declare.

\appendix
\section{Existence and uniqueness of solution}
\label{appx:lipschitzness}
The analysis in Section 3 and onward require the ANN training dynamics to have a unique solution on the interval $[k_i,k_f]$. The following proposition is a variation of a proposition on Liptschitzness given in \cite{khalil2002nonlinear}.

\begin{proposition} 
\label{prop:solution}
Suppose that $f_\theta(\hat{y}(k))$ is bounded on the discrete interval $k\in[k_i, k_f]$ and satisfies
\begin{eqnarray}
    &&||f_\theta(\hat{y}_1)-f_\theta(\hat{y}_2)||_2\leq L||\hat{y}_1-\hat{y}_2||_2
\end{eqnarray}
$\forall \hat y_1,\hat y_2 \in \mathbb{R}^{o},\forall k\in [k_i, k_f]$ with $L$ being the Lipschitz constant. Then, for all initial conditions $||f_\theta(\hat{y}_i)||_2\leq \phi$ with a bounded real scalar $\phi$. The discrete difference equation $\hat y (k+1) = f_\theta(\hat{y}(k))$, with $\hat{y}_i=\hat{y}(k_i)$ has a unique solution over the time interval $[k_i,k_f]$.
\end{proposition}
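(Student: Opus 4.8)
The plan is to exploit the fact that, in contrast to the continuous-time Picard--Lindel\"of result from \cite{khalil2002nonlinear} that this proposition adapts, a discrete-time recursion of the form $\hat y(k+1)=f_\theta(\hat y(k))$ admits existence and uniqueness almost immediately, because the solution is generated by simple forward iteration. First I would construct a candidate solution explicitly; then I would establish uniqueness by a separate induction. The Lipschitz and boundedness hypotheses, which are essential in the continuous setting, enter here only to guarantee that the generated trajectory stays finite over the horizon and to make the uniqueness argument quantitative.

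For existence, I would set $\hat y(k_i)=\hat y_i$ and define $\hat y(k+1)=f_\theta(\hat y(k))$ for each $k\in\{k_i,\dots,k_f-1\}$. Since $f_\theta:\mathbb{R}^{o}\to\mathbb{R}^{o}$ is a single-valued map, every iterate is uniquely determined, and the boundedness assumption $||f_\theta(\cdot)||_2\leq\phi$ ensures each $\hat y(k)$ remains in a bounded subset of $\mathbb{R}^{o}$; consequently no iterate escapes to infinity on the finite interval. A finite induction over the $k_f-k_i$ steps then produces a solution defined on all of $[k_i,k_f]$.

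For uniqueness, I would suppose $\hat y^{(1)}$ and $\hat y^{(2)}$ both solve the recursion with the same initial value $\hat y^{(1)}(k_i)=\hat y^{(2)}(k_i)=\hat y_i$ and argue by induction on $k$. The base case holds by assumption. For the inductive step, the Lipschitz bound yields $||\hat y^{(1)}(k+1)-\hat y^{(2)}(k+1)||_2 = ||f_\theta(\hat y^{(1)}(k))-f_\theta(\hat y^{(2)}(k))||_2 \leq L\,||\hat y^{(1)}(k)-\hat y^{(2)}(k)||_2$, so a vanishing gap at step $k$ forces a vanishing gap at step $k+1$. Hence the two trajectories coincide on $[k_i,k_f]$.

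The step I expect to be least routine is conceptual rather than technical: making clear why the strong hypotheses inherited from the continuous-time statement are not strictly needed for the discrete result---uniqueness in fact follows from $f_\theta$ being single-valued alone, while boundedness only certifies finiteness of the solution over the horizon. An alternative, should a fixed-point framing be preferred, is to define the one-step solution operator on the finite-dimensional sequence space $(\mathbb{R}^{o})^{k_f-k_i+1}$ and show it is a contraction in a suitably weighted norm, recovering the result via the Banach fixed-point theorem; but the direct induction above is cleaner and avoids this overhead.
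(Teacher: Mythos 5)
Your proof is correct, and it takes a genuinely different route from the paper. The paper does not argue the proposition directly: it simply asserts that, by continuity of $\hat y(k)$ in $\theta(k)$, the result is ``a direct consequence'' of the local existence and uniqueness theorem in Khalil (Ch.~2.2, Theorem~2.4), i.e.\ it defers entirely to the continuous-time Picard--Lindel\"of machinery. Your argument is self-contained and discrete-native: existence by forward iteration over the finitely many steps, with the boundedness hypothesis certifying that no iterate escapes to infinity, and uniqueness by a finite induction in which the Lipschitz bound propagates a zero gap from step $k$ to step $k+1$. Your observation that uniqueness already follows from $f_\theta$ being single-valued --- so that the Lipschitz and boundedness hypotheses are inherited from the continuous-time setting rather than strictly necessary here --- is accurate and is arguably a clearer account of why the proposition holds than the paper's citation, which transplants a continuous-time theorem whose Picard-iteration content has no real analogue in a one-step recursion. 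What the paper's approach buys is brevity and a pointer to a standard reference; what yours buys is an argument that actually matches the discrete structure of eq.~\eqref{eq:outputODE} and makes the role of each hypothesis explicit.
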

\begin{proof}
By means of the continuity assumption of $\hat y(k)$ in $\theta(k)$, the proof is a direct consequence of \cite{khalil2002nonlinear} (Ch. 2.2, pp 67, Theorem 2.4).
\end{proof}
It follows that when training an ANN under gradient descent a solution to eq. \eqref{eq:outputODE} always exists and that the solution is unique on the time interval $[k_i,k_f]$.

\section{Local training dynamics}
\label{appx:linearization}
The following appendix details the first order Taylor linearization used to derive eq.~\ref{eq:linear}. The following is a variation of the linearization used in \cite{googlepaper}.
 We define the time instance of linearization as $k_0$. In this local aspect, $\hat y(k)$  is described with $\theta(k_0)$ and $\vartheta(k) \equiv \theta(k)-\theta(k_0)$. That is, 
\begin{eqnarray}
\hat y(k)=\hat y(k_0)+\left(\fracpartial{\hat y(k)^T}{\theta(k)}\right)^T_{\theta(k_0)}  \vartheta(k)+\sum_{i=2}^{\infty} \fracpartial{^{(i)}\hat y(k)^T}{^{(i)} \theta(k)}^T_{\theta(k_0)} \frac{\vartheta^{(i)}(k)}{i!} \label{eq:output}
.\end{eqnarray}
Using only the first term from the Taylor expansion (eq. \eqref{eq:output}) results in
\begin{eqnarray}
\label{eq:y_lin}
\hat y_{\vartheta}(k) \equiv \hat y(k_0) +\left(\fracpartial {\hat y(k)^T}  {\theta(k)}\right)^T_{\theta(k_0)}\vartheta(k).
\end{eqnarray}
The smoothness (once continuously differentiable) of the loss function enables the definition of the \emph{local training dynamics} by, 
\begin{eqnarray}
\hat y_{\vartheta}(k+1) = 
 \hat y_{\vartheta}(k)-
\alpha\Theta(k_0) \fracpartial{\mathcal{L}(\hat{y}(k),y)}{\hat{y}(k)}=f_{\vartheta}(\hat y(k)).
\end{eqnarray}

\section{Lagrange error bounds for local training dynamics}
\label{appx:Lagrangian}
In order to quantify the error between the local and the global training dynamics the Lagrange error bound \cite{khalil2002nonlinear} is used,
\begin{eqnarray}
\label{eq:NTK:overbound}
&& ||\hat y(k)-\hat y_{\vartheta} (k)||_2
\leq \max_{\theta(\kappa)}  \frac 12 \left |\left| \left( \fracpartial{^2\hat y(k)^T}{^2\theta(k)}\right)^T_{\theta(\kappa)}  \right| \right |_2 \cdot ||\vartheta(k)||^{2}_2
\end{eqnarray}
where for any $\kappa$ on the discrete interval $[k_0,k]$ $\theta(\kappa)$ is evaluated.
Note that eq. \eqref{eq:NTK:overbound} expresses the overbound of the deviation between the outputs obtained from the local training dynamics $\Theta(k_0)$ and the global training dynamics $\Theta(k)$. Meaning, while the linear dynamics are not replicating the learning behavior exactly we can still quantify the goodness of the approximation. 

\section{Loss Linearization}
\label{appx:losslin}
Although, the local training dynamics are linearized w.r.t.~$\theta(k_0)$, the derivative of the loss $\left(\fracpartial{\mathcal{L}(\hat{y}(k),y)}{ \hat{y}(k)}\right)_{\theta(k_0)}$ can still be a nonlinear function of the output $\hat{y}(k)$ (e.g., for cross entropy loss). When this is the case we propose a further linearization step and apply a first-order Taylor series approximation on the loss derivative:
\begin{eqnarray}
\left(\fracpartial{\mathcal{L}(\hat{y}(k),y)}{ \hat{y}(k)}\right)_{\theta(k_0)}  = & \left(\fracpartial{\mathcal{L}(\hat{y}(k_0),y)}{ \hat{y}(k_0)}\right)_{\theta(k_0)}  + \left(\fracpartial{^2\mathcal{L}(\hat{y}(k_0),y)}{^2\hat{y}(k_0)}\right)_{\theta(k_0)}(\hat y(k)-\hat y(k_0))   \nonumber \\ & +  \sum_{i=2}^{\infty} \left(\fracpartial{^{(i+1)}\mathcal{L}(\hat{y}(k_0),y)}{^{(i+1)}\hat{y}(k_0)}\right)_{\theta(k_0)}\frac{(\hat y(k)-\hat y(k_0))^{(i)}}{i!} 
\end{eqnarray}
and 
\begin{equation}
    \left(\fracpartial{\mathcal{L}(\hat{y}(k),y)}{ \hat{y}(k)}\right)_{L} =  \left(\fracpartial{\mathcal{L}(\hat{y}(k_0),y)}{ \hat{y}(k_0)}\right)_{\theta(k_0)}  + \left(\fracpartial{^2\mathcal{L}(\hat{y}(k_0),y)}{^2\hat{y}(k_0)}\right)_{\theta(k_0)}(\hat y(k)-\hat y(k_0))
\end{equation}
with Lagrange error bound
\begin{eqnarray}
    \label{eq:LossLagrangeError}
&& \left |\left|\left(\fracpartial{\mathcal{L}(\hat{y}(k),y)}{ \hat{y}(k)}\right)_{\theta(k_0)}-\left(\fracpartial{\mathcal{L}(\hat{y}(k),y)}{ \hat{y}(k)}\right)_{L} \right| \right |_2
  \nonumber \\ && \leq  \frac 12 \left |\left| \left(\fracpartial{^3\mathcal{L}(\hat{y}(k_0),y)}{^3\hat{y}(k_0)}\right)_{\theta(k_0)}  \right| \right |_2 \cdot ||(\hat y(k)-\hat y(k_0))||_2^2.
\end{eqnarray}
Next, insert the linearized loss into eq.~\eqref{eq:linear} and assume $\hat y(k) \approx \hat y_\vartheta(k)$ and $\hat y(k_0) = \hat y_\vartheta(k_0)$. We define the \textit{control oriented training dynamics} as
\begin{eqnarray}
\label{eq:lin_trainig_dynamics}
  &  \hat y_{\vartheta}(k+1) = 
\nonumber \\ & \hat y_{\vartheta}(k) -  
\alpha\Theta(k_0)\left( \left(\fracpartial{\mathcal{L}(\hat{y}(k_0),y)}{ \hat{y}(k_0)}\right)_{\theta(k_0)} + \left(\fracpartial{^2\mathcal{L}(\hat{y}(k_0),y)}{^2\hat{y}(k_0)}\right)_{\theta(k_0)}(\hat y_\vartheta(k)-\hat y(k_0)) \right) = \nonumber \\ &
\left( I - \alpha\Theta(k_0)\left(\fracpartial{^2\mathcal{L}(\hat{y}(k_0),y)}{^2\hat{y}(k_0)}\right)_{\theta(k_0)} \right)\hat y_{\vartheta}(k) \nonumber \\ & + \alpha\Theta(k_0)\left(\fracpartial{^2\mathcal{L}(\hat{y}(k_0),y)}{^2\hat{y}(k_0)}\right)_{\theta(k_0)} \hat y(k_0) - \alpha\Theta(k_0) \left(\fracpartial{\mathcal{L}(\hat{y}(k_0),y)}{ \hat{y}(k_0)}\right)_{\theta(k_0)}.
\end{eqnarray}
Note that there is a bias term $\alpha\Theta(k_0)\left(\fracpartial{^2\mathcal{L}(\hat{y}(k_0),y)}{^2\hat{y}(k_0)}\right)_{\theta(k_0)} \hat y(k_0) - \alpha\Theta(k_0) \left(\fracpartial{\mathcal{L}(\hat{y}(k_0),y)}{ \hat{y}(k_0)}\right)_{\theta(k_0)}$ that only offsets the dynamics. 
The cumulative error bound (based on eq. ~\eqref{eq:NTK:overbound} and eq. ~\eqref{eq:LossLagrangeError} can be given as follows. Denote the left hand side of eq.~\eqref{eq:NTK:overbound} with $E_y$ and eq. ~\eqref{eq:LossLagrangeError} with $E_L$. Consider eq. ~\eqref{eq:LossLagrangeError} and inject the linearization error of $\hat y(k)$ as 
\begin{equation}
    E_L(E_y) \leq \frac 12 \left |\left| \left(\fracpartial{^3\mathcal{L}(\hat{y}(k_0),y)}{^3\hat{y}(k_0)}\right)_{\theta(k_0)}  \right| \right |_2 \cdot ||(\hat y_\vartheta(k) + E_y -\hat y(k_0))||_2^2.
\end{equation}
Then, the error bound for the control-oriented training dynamics can be given by  
\begin{equation}
    E \leq E_y - \alpha\Theta(k_0) E_L(E_y).
\end{equation}
 
In certain cases, when the loss is a quadratic function of the output (e.g., SSE, or MSE losses), the linearization error of the loss disappears. 

\section{Examples of equilibrium points}
\label{appx:equilibriums}
The following appendix discusses the conditions under which an equilibrium point may exist. The definition of an equilibrium point is a point where no change to the ANN output occurs, i.e  $\hat y (k + 1) = \hat y (k)$. In case of the global training dynamics $\hat y (k + 1) = \hat y (k)$ can only occur if $\alpha\Theta(k) \left( \fracpartial{\mathcal{L}(\hat{y}(k),y)}{ \hat{y}(k)}\right)_{\theta(k)} = 0$. More precisely, there is an equilibrium point if any of the following conditions are fulfilled.
\begin{enumerate}
    \item The most important case is when the loss is at a (local) minimum, $\left( \fracpartial{\mathcal{L}(\hat{y}(k),y)}{ \hat{y}(k)}\right)_{\theta(k)} = 0$.
    \item The learning is frozen $\alpha = 0$.
    \item The kernel is a null matrix $\Theta(k) = \underline{\underline{0}}$. However, it can only occur in some very specific cases, e.g., if $\fracpartial{ \hat{y}(k, x_i)^T}{\theta(k)}$ and  $\fracpartial{ \hat{y}(k, x_j)^T}{\theta(k)}$ for all data combinations $x_i$, $x_j$. 
    \item A less trivial case is when $\Theta(k) \left( \fracpartial{\mathcal{L}(\hat{y}(k),y)}{ \hat{y}(k)}\right)_{\theta(k)}$ is a zero vector while $\Theta(k) \neq \underline{\underline{0}}$, and $\left( \fracpartial{\mathcal{L}(\hat{y}(k),y)}{ \hat{y}(k)}\right)_{\theta(k)} \neq 0$. I.e., the derivative of the loss $\left( \fracpartial{\mathcal{L}(\hat{y}(k),y)}{ \hat{y}(k)}\right)_{\theta(k)}$ is in the null space of the kernel. 
\end{enumerate}

\section{Boundedness for common losses}
\label{appx:boundednessLosses}
The boundedness of some common loss functions is analyzed, assuming static target $y$. 
\begin{itemize}
    \item \textbf{Mean squared error (MSE) loss}: The MSE loss is given as $\mathcal{L}(\hat{y}(k), y) = \frac{1}{2rn_L}(\hat y_{\vartheta}(k) - y)^2$. Substituting the MSE loss in eq.~\eqref{eq:linear} one gets
    \begin{equation}
        \hat y_{\vartheta}(k+1) = \hat y_{\vartheta}(k)-\frac{\alpha\Theta(k_0)}{rn_L}(\hat y_{\vartheta}(k) - y).
        \label{eq:mseode}
    \end{equation}
    This difference equation has an equilibrium point at a bounded $y$, which is proven in later sections. 
    For the linear time-discrete ANN training dynamics under MSE loss
    \begin{equation}
    \label{eq:mseode2}
        \hat y_{\vartheta}(k+1)=\left(I-\frac{\alpha\Theta(k_0)}{rn_L}\right)\hat y_{\vartheta}(k) + \frac{\alpha\Theta(k_0)}{rn_L}y.
    \end{equation}
    In eq.~\eqref{eq:mseode2}, trajectories of $ \hat y_{\vartheta}(k)$ can be checked for boundedness by looking at the eigenvalues of the system matrix $\left(I-\frac{\alpha\Theta(k_{0})}{rn_L}\right)$.
    The local training dynamics are internally exponentially bounded iff
    \begin{equation}
    |\lambda| < 1 \quad \forall \lambda \in eig\left(I-\frac{\alpha\Theta(k_0)}{rn_L}\right).
    \end{equation}
    The proof for this can be found in \cite{rugh}. 
    \item \textbf{Sum of Squared Error (SSE) loss}: The SSE loss is similar to the MSE loss without the normalization with $rn_L$, i.e., $\mathcal{L}(\hat y_{\vartheta}(k), y) = \frac{1}{2}(\hat y_{\vartheta}(k) - y)^2$. Therefore, following the same line of thought as for the MSE, if 
    \begin{equation}
    |\lambda| < 1 \quad \forall \lambda \in eig(I-\alpha\Theta(k_0)),
    \end{equation}
    then $\hat y_{\vartheta}(k)$ does not diverge from $y$.
    Since $rn_L$ is a positive integer, the overbound for a non-divergent $\alpha$ with SSE loss is smaller than with MSE loss. 
    \item \textbf{Mean absolute error (MAE) loss}: The mean absolute error loss is given as $\mathcal{L}(\hat y_{\vartheta}(k), y) = \frac{1}{rn_L}||\hat y_{\vartheta}(k) - y||_1$ and its derivative w.r.t. $\hat y_{\vartheta}(k)$ is 
    \begin{equation}
        \fracpartial{\mathcal{L}(\hat y_{\vartheta}(k),y_i)}{\hat y_{\vartheta}(k)} = \frac{1}{rn_L} \sum_{i=1}^{rn_L} \frac{\hat y_{\vartheta,i}(k) - y}{|\hat y_{\vartheta,i}(k) - y_i|}
    \end{equation}
    for $\hat y_{\vartheta,i}(k) \neq y_i \forall i$. Index $i$ denotes one element of the vector-valued outputs and labels. Outside of $\hat y_{\vartheta,i}(k) = y_i$, the derivative is $-1$ if $\hat y_{\vartheta,i}(k) < y_i$ and $1$ if $\hat y_{\vartheta,i}(k) > y_i$. Therefore, the discrete learning dynamics with MAE loss can be written as
    \begin{equation}
        y_{\vartheta}(k+1)= 
y_{\vartheta}(k) - \frac{\alpha \Theta(k_0)}{rn_L}sgn(\hat y_{\vartheta}(k) - y).
    \end{equation}
    Intuitively, this means the loss will uniformly converge to the $\frac{\alpha \Theta(k_0)}{rn_L}$ radius of $y$. The conditions for exponential internal boundedness are not fulfilled.
    \item \textbf{Cross entropy loss}: The cross entropy loss or log loss is used for classification, rather than regression tasks. It can be computed as $\mathcal{L}(\hat y_{\vartheta}(k), y) = -y^Tlog(\hat y_{\vartheta}(k))$. Then, the nonlinear difference-equation for the learning dynamics is
    \begin{equation}
    \label{eq:CEloss}
        y_{\vartheta}(k+1)= y_{\vartheta}(k) + \alpha \Theta(k_0) \check y_{\vartheta}(k)y,
    \end{equation}
    where $\check y_{\vartheta}(k)$ is a diagonal matrix $\in \mathbb{R}^{rn_L \times rn_L}$ of the element-wise inverses of $\hat y_{\vartheta}(k)$, assuming $\hat y_{\vartheta}(k)$ has no zero elements. Then, $y$ is an equilibrium point for the difference equation if $\alpha \Theta(k_0)log(\check y_{\vartheta}(k)y$ is a null vector. I.e.,~$y$ is an equilibrium point if it is in the nullspace of the matrix $\alpha \Theta(k_0)\check y_{\vartheta}(k)$. A trivial solution to this if $y=0$, and this is the only solution if the columns in $\alpha \Theta(k_0)\check y_{\vartheta}(k)$ are linearly independent. If they are linearly dependent, there are infinitely many equilibrium points. For more in-depth analysis a Lyapunov function is sought to give boundedness conditions for the cross entropy loss. In discrete-time, Lyapunov boundedness is fulfilled if $V(f(x)) - V(x) < 0$, where $V(x)$ is a Lyapunov function \cite{khalil2002nonlinear}.
    Let $V(x) = x^Tx$ be a Lyapunov function. Then for eq.~\eqref{eq:CEloss} the Lyapunov boundedness criteria is
    \begin{equation}
        \left( y_{\vartheta}(k) + \alpha \Theta(k_0) \check y_{\vartheta}(k)y \right)^T\left( y_{\vartheta}(k) + \alpha \Theta(k_0) \check y_{\vartheta}(k)y \right) - y_{\vartheta}(k)^Ty_{\vartheta}(k) <0
    \end{equation}
    which can be simplified to
    \begin{equation}
        \alpha^2 y^T \check y_{\vartheta}^T(k) \Theta^T(k_0) \Theta(k_0)  \check y_{\vartheta}(k) y - 2 \alpha y_{\vartheta}^T(k)\Theta(k_0)  \check y_{\vartheta}(k) y < 0.
    \end{equation}
    Although, this equation is easy to check whether it is fulfilled or not, a universal conclusion cannot be drawn for the global boundedness. On the other hand, the cross entropy loss is mainly used for classification tasks rather than regression where the target $y$ and the output $y_{\vartheta}(k)$ are normalized, i.e.,  $y, \; y_{\vartheta}(k) \in (0,1) \subset \mathbb{R}^{rn_L}$. In such a case (in a local sense) $\alpha$ is always positive, $y$, and $y_{\vartheta}(k)$ are positive vectors, $\check y_{\vartheta}(k)$ is a diagonal matrix with positive elements. $\Theta(k_0)$ is symmetric and if the input is normalized, it is positive-definite too \cite{jacot}. Then, for a sufficiently small $\alpha$, Lyapunov boundedness is fulfilled.
\end{itemize}
From the above list it is obvious, that from a control-oriented perspective, the SSE and MSE losses are the most appropriate.

\section{Boundedness of the global dynamics}
\label{appx:boundedness}
A criteria for the boundedness of the global training dynamics can be given based on the linearized dynamics. To this end, we subtract the Lagrange error (eq.~\eqref{eq:NTK:overbound}) from eq.~\eqref{eq:LinearizedExpBoundedness} giving a less conservative bound for the global training dynamics:
\begin{equation}
    ||\hat y(k)-\hat y_e||_2\leq \gamma e^{-\lambda(k-k_0)} ||\hat y_{\vartheta}(k_0) - \hat y_e||_2 - \max_{\theta(r)}  \frac 12 \left |\left| \left( \fracpartial{^2\hat y(k)^T}{^2\theta(k)}\right)^T_{\theta(r)}  \right| \right |_2 \cdot ||\vartheta^{2}(k)||_2.
\end{equation}
The above expression suggests that the global training dynamics is not exponentially bounded given the Lagrange error is nonzero. On the other hand, it has important implications on the validity of the linearized training dynamics.
Since the linearization error grows over time, a time instant $k_c > k_0$  can be found where exponential boundedness for the local training dynamics gets violated. I.e., if 
\begin{eqnarray}
    ||\hat y_\vartheta(k_c)-\hat y_e||_2 > && \gamma e^{-\lambda(k_c-k_0)} ||\hat y_{\vartheta}(k_0) - \hat y_e||_2 \nonumber \\ - &&\max_{\theta(r)}  \frac 12 \left |\left| \left( \fracpartial{^2\hat y(k_c)^T}{^2\theta(k_c)}\right)^T_{\theta(r)}  \right| \right |_2 \cdot ||\vartheta^{2}(k_c)||_2
\end{eqnarray}
we can explicitly say that the linear model is poor and must be recalculated. 

\section{The DARE equation}
The following appendix describes the standard Discrete-time Algebraic Riccati Equation (DARE) \cite{kwakernaak1972linear} which is deployed in order to  find the solution $P$ to the quadratic infinite optimization problem in eq.~\eqref{eq:LQC}. The proof that eq.~\eqref{eq:riccati} solves the cost given in eq.~\eqref{eq:LQC} is given in \cite{kwakernaak1972linear}.

\label{appx:DARE}
\begin{eqnarray}
A= \begin{bmatrix}     I-\alpha\Theta(k_0) & \alpha\Theta(k_0) y \\
     \mathbf{0} & 1
\end{bmatrix}, \,\,\,
B=\begin{bmatrix}
     \alpha\Theta(k_0)\\ \textbf{0}
 \end{bmatrix}, \,\,\,
P = A^TPA+Q-A^TPB\left (R+B^TPB \right)^{-1}B^TPA
\label{eq:riccati}
\end{eqnarray}

\section{Initialization of the ANN}
\label{appx:init}
There are three common ways to initialize neural networks of infinite width to derive fixed kernels.
\begin{itemize}
    \item Standard initialization. The weight for each neuron are given as $\mathcal{N}(0, \frac{\sigma_w^2}{s n_l})$ ($\mathcal{N}(0, \frac{\sigma_w^2}{s n_l n_m})$ for convolutional layers), and biases are $\mathcal{N}(0, \sigma_b^2)$ with $\sigma_w$, and $\sigma_b$ being initialization variances, $n_l$ is the width of each layer, $n_m$ is the number of spatial positions in the convolution kernel, and $s$ is a width-scaling factor that goes to $\infty$ for infinite width networks. The main issue with this initialization is that in the infinie width-limit the entries of the NTK diverge.
    \item NTK initialization, proposed by \cite{jacot}. In this case, weights and biases are initialized with normalized gaussian distributions $\mathcal{N}(0,1)$. The weights are multiplied with $\frac{\sigma_w}{\sqrt{s n_l}}$, ($\frac{\sigma_w}{\sqrt{s n_l n_m}}$ for convolutional layers), and the biases are scaled with $\sigma_b$. That is to make the NTK values converge. 
    \item Improved standard initialization \cite{sohl2020infinite}. The difference between the standard and the improved version is that the width-scaling factor is pulled out from the normal distribution, i.e.~$\frac{1}{\sqrt{s}}\mathcal{N}(0, \frac{\sigma_w^2}{n_l})$ and $\frac{1}{\sqrt{s}}\mathcal{N}(0, \frac{\sigma_w^2}{n_l n_m})$. 
    \end{itemize}

The initializations are summarized in Table \ref{tab:NTK_init}. 
    
According to \cite{park2019effect, sohl2020infinite}, infinite width networks with various architectures achieve similar error regardless of initialization. I.e., if they converge, the final value will be similar in output space, regardless of initialization. On the other hand, it is not the case in parameter space; the NTK will take different final numerical values depending on initialization. This means it will traverse a different trajectory during learning since the eigenvalues of the NTK will influence the learning dynamics.

All experiments, both regression and classification, implement initialization 2 as recommended by \cite{jacot}.
\newpage
\section{Supplementary figures}
\label{appx:figs}
\subsection{Regression experiment supplements}
\begin{figure}[!htp]
    \centering
         \begin{subfigure}{0.34\textwidth}
         \centering
         \caption{}
         \includegraphics[width=\textwidth]{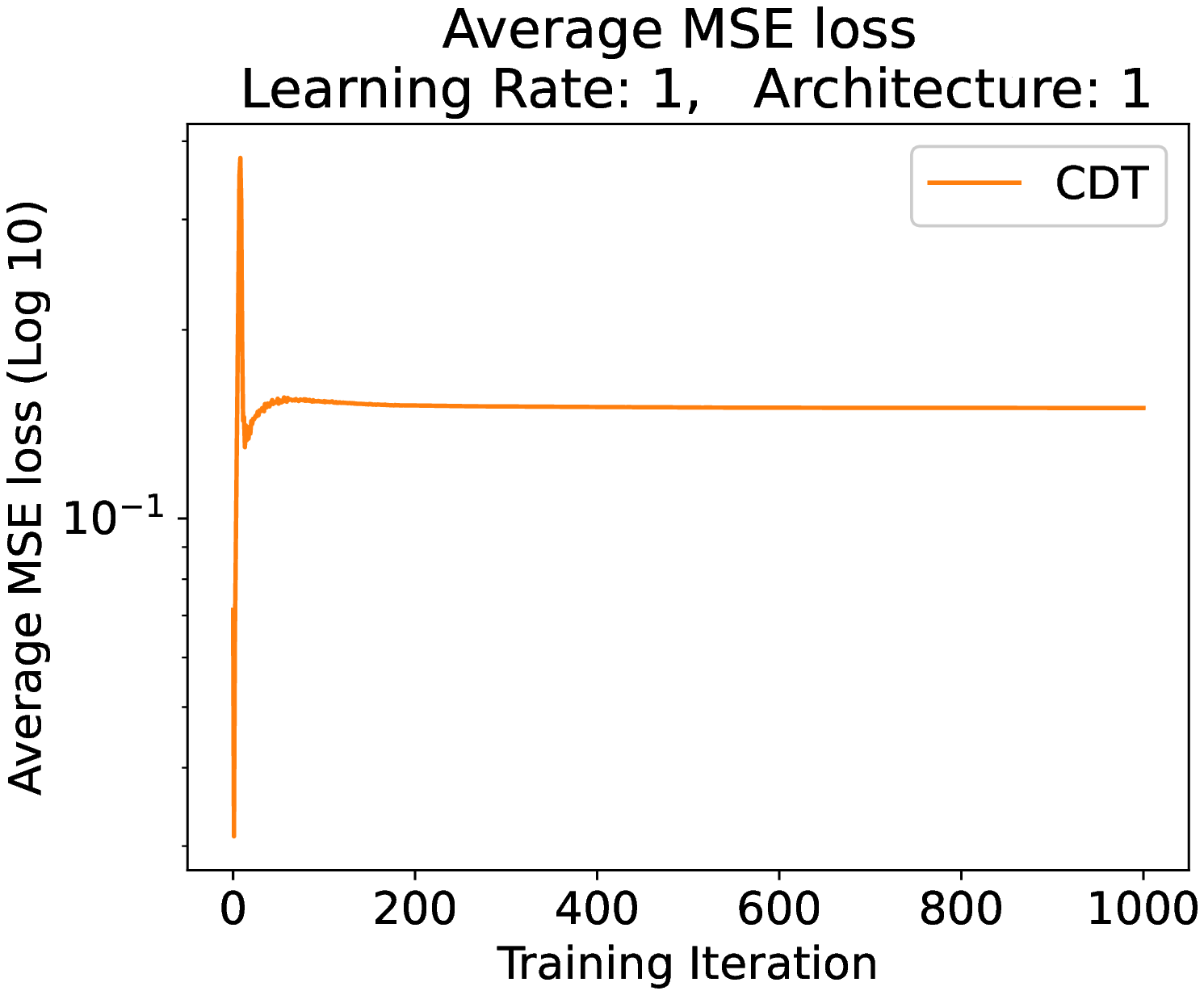}
         \label{fig:a0_1}
     \end{subfigure}
     \begin{subfigure}{0.34\textwidth}
         \centering
         \caption{}
         \includegraphics[width=\textwidth]{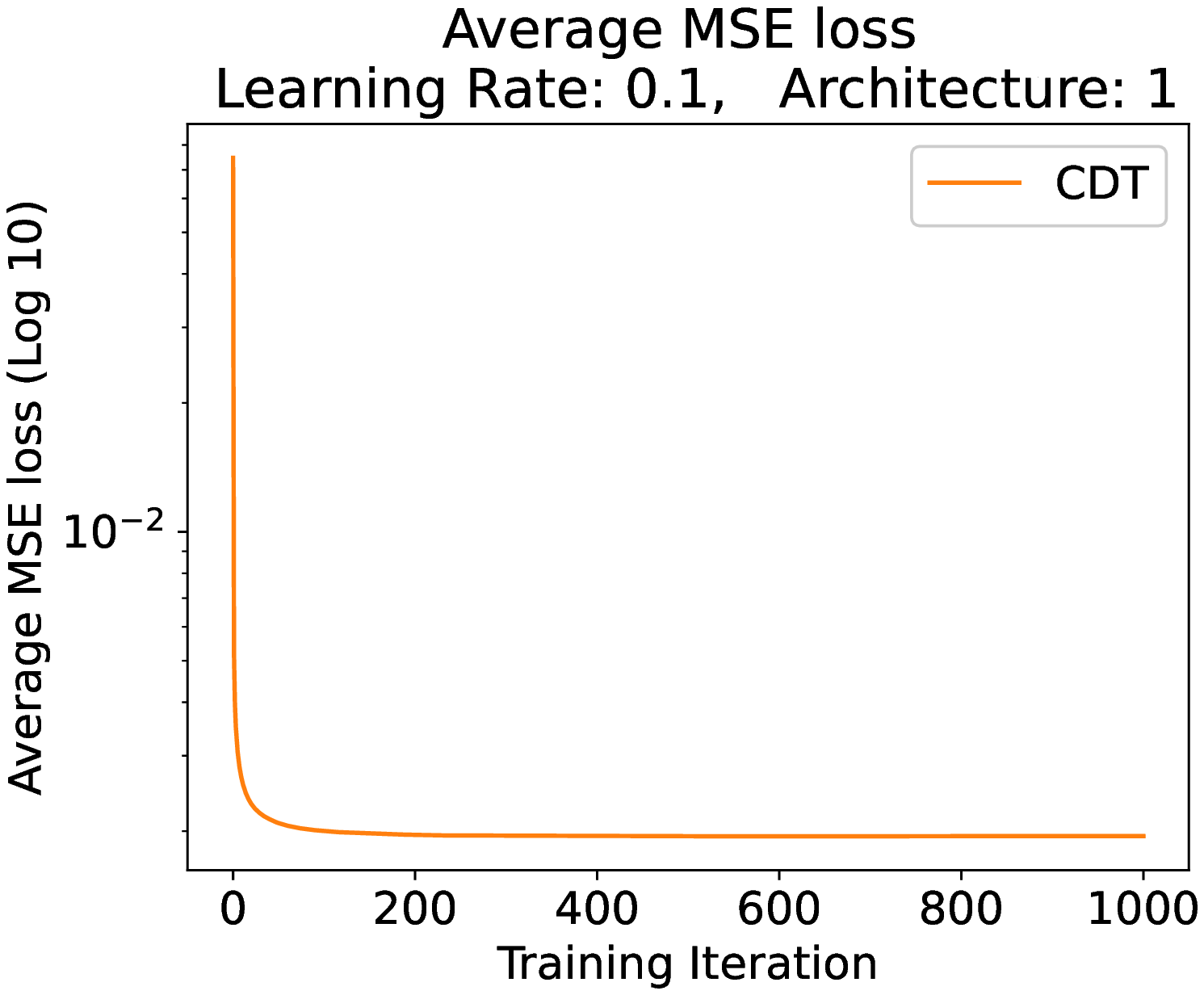}
         \label{fig:a0_01}
     \end{subfigure}
      \begin{subfigure}{0.34\textwidth}
         \centering
         \caption{}
         \includegraphics[width=\textwidth]{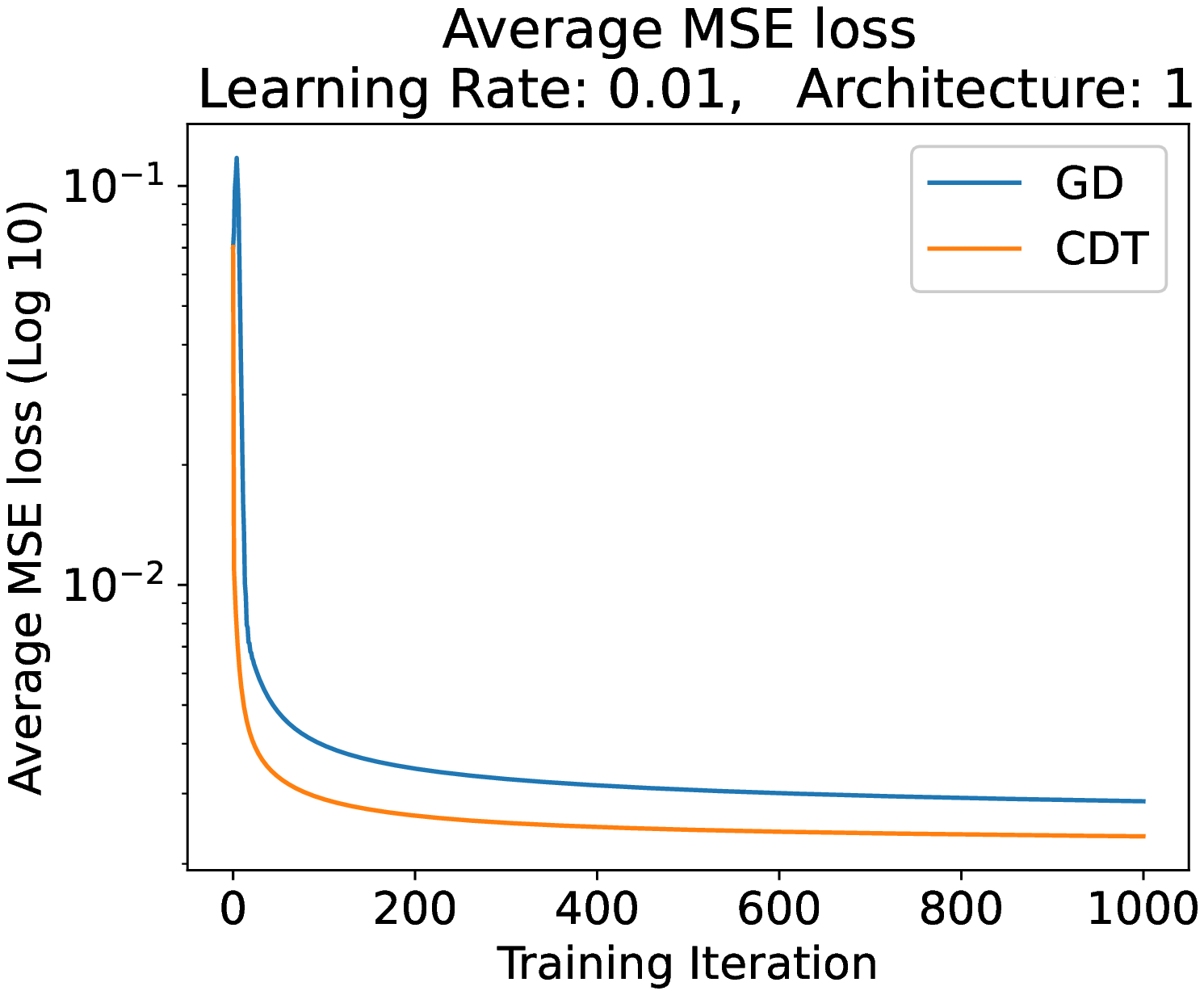}
         \label{fig:a0_001}
     \end{subfigure}
      \begin{subfigure}{0.34\textwidth}
         \centering
         \caption{}
         \includegraphics[width=\textwidth]{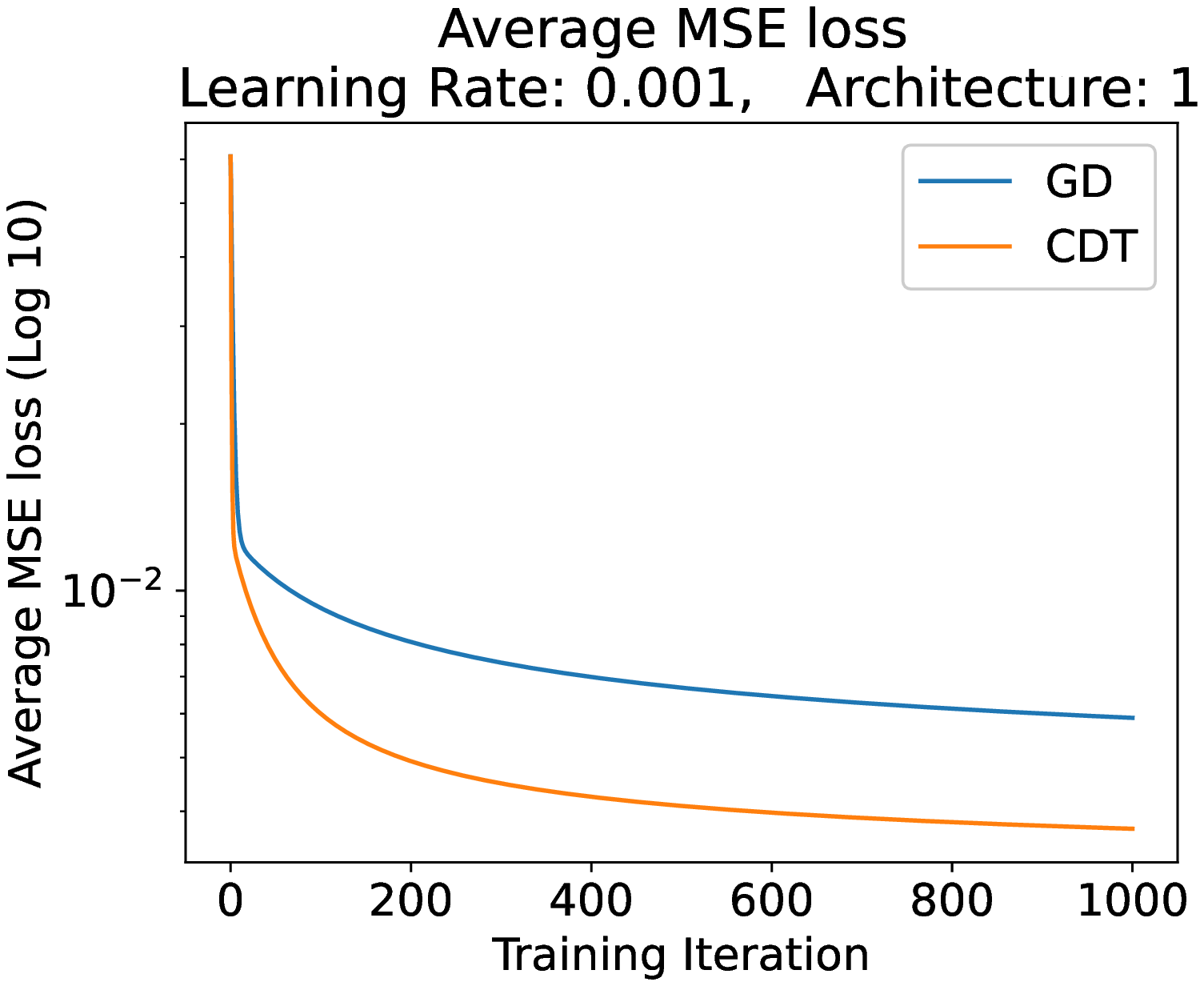}
         \label{fig:a0_0001}
     \end{subfigure}
     \caption{Absolute MSE loss on validation data for fully connected ANN architecture 1 averaged over all  10 initializations and data shuffles during training with GD and CDT. CDT  converges with fewer iterations than GD for all learning rates. Moreover, CDT is stable at higher learning rates ($\alpha\geq0.1$), while GD diverges to $\infty_+$. While reaching a finite steady-state behavior for learning rate $\alpha = 1$ CDT is not observed to converge to the true labels.}
\end{figure}
\newpage
\begin{figure}[!htp]
      \begin{subfigure}{0.34\textwidth}
         \centering
         \caption{}
         \includegraphics[width=\textwidth]{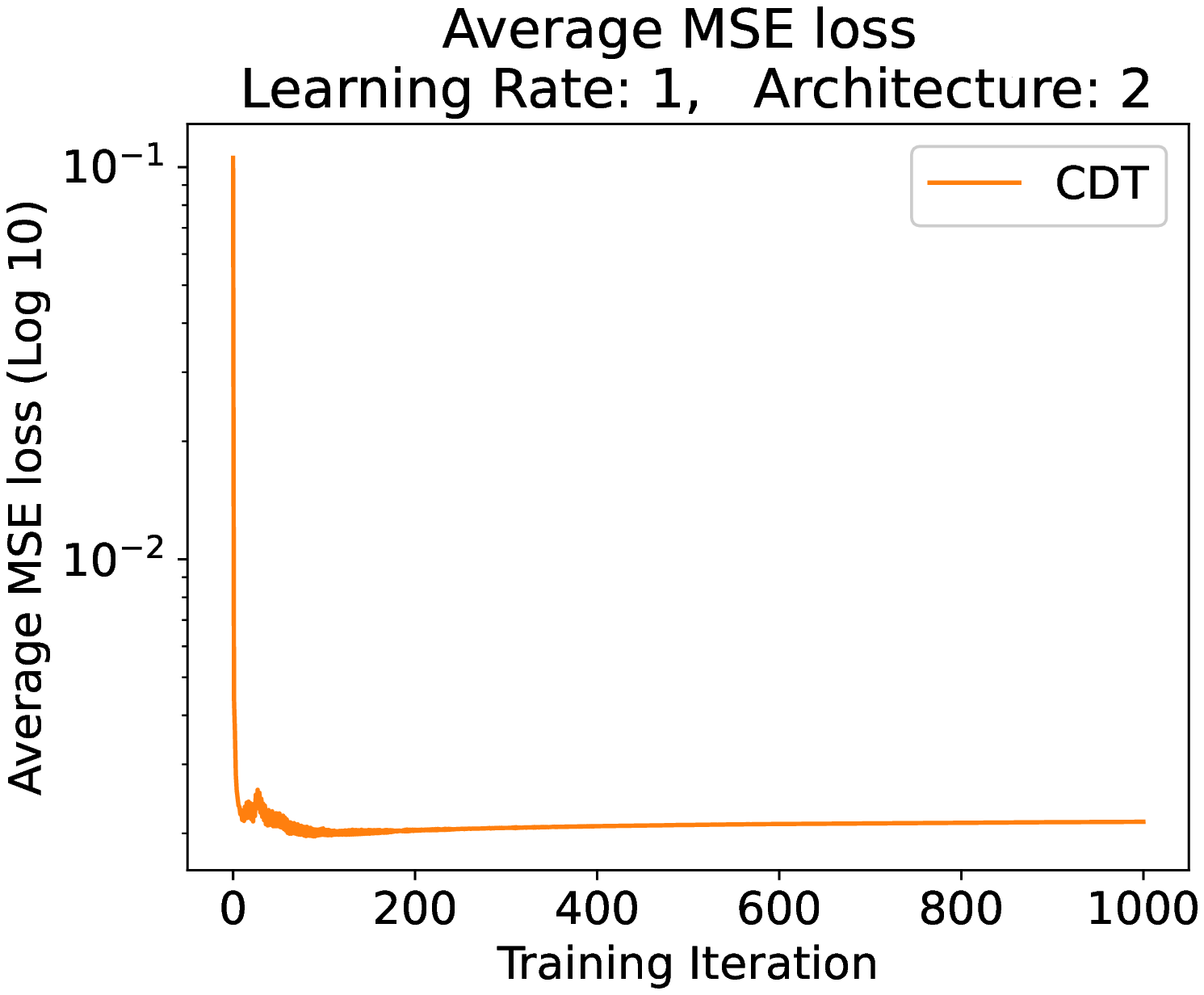}
         \label{fig:a1_01}
     \end{subfigure}
      \begin{subfigure}{0.34\textwidth}
         \centering
         \caption{}
         \includegraphics[width=\textwidth]{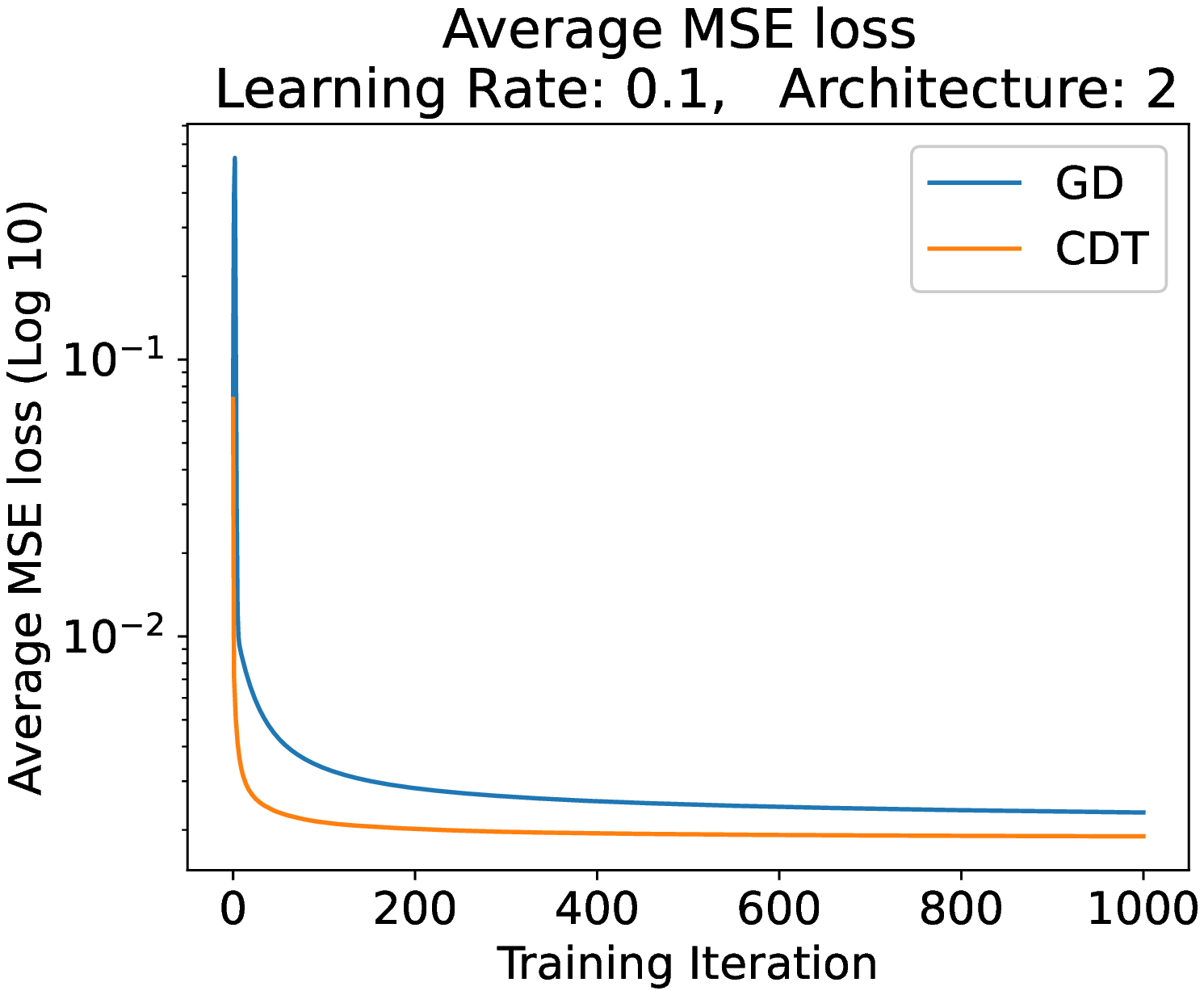}
         \label{fig:a1_01}
     \end{subfigure}
       \begin{subfigure}{0.34\textwidth}
         \centering
         \caption{}
         \includegraphics[width=\textwidth]{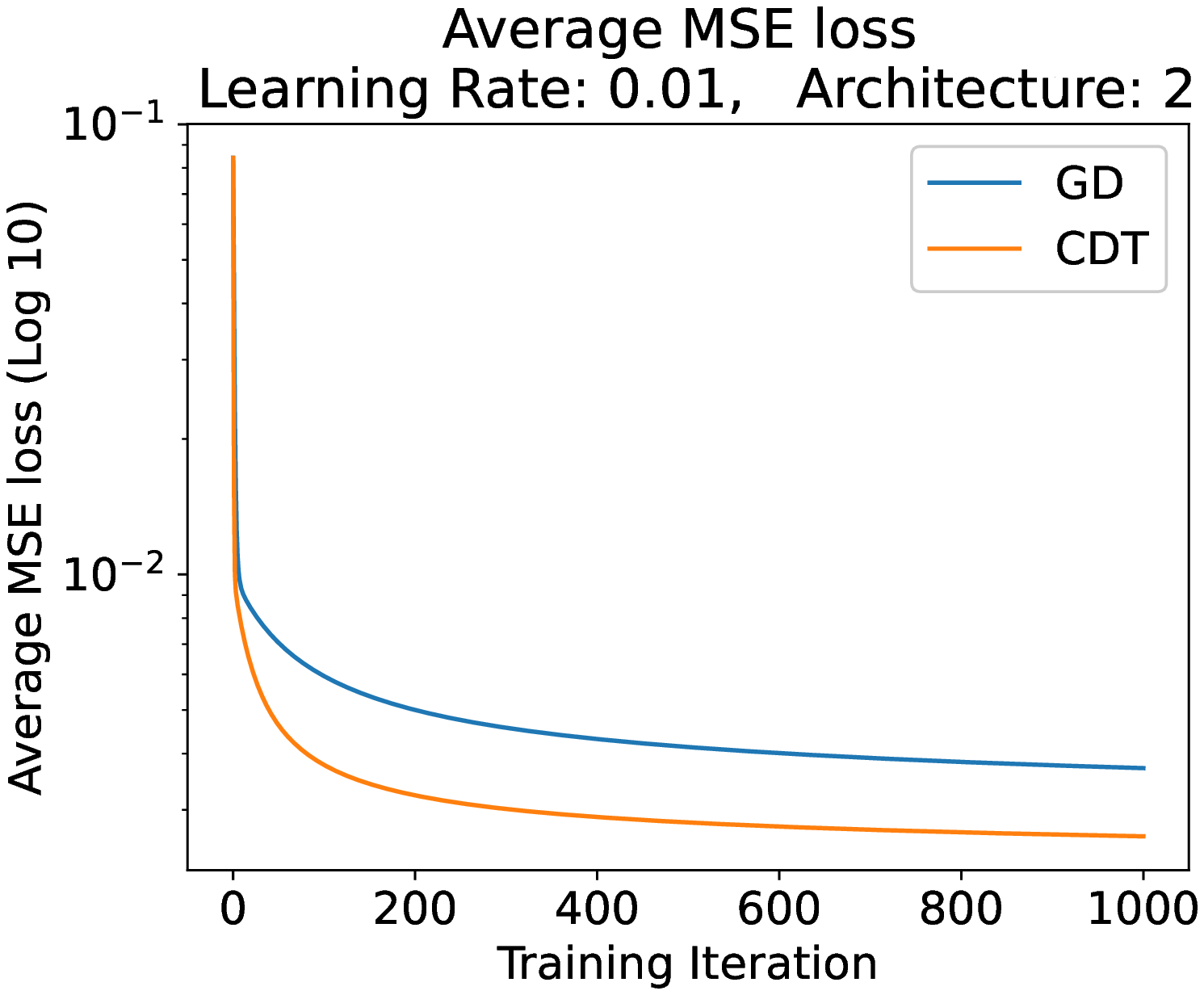}
         \label{fig:a1_001}
     \end{subfigure}
     \centering
           \begin{subfigure}{0.34\textwidth}
         \centering
         \caption{}
         \includegraphics[width=\textwidth]{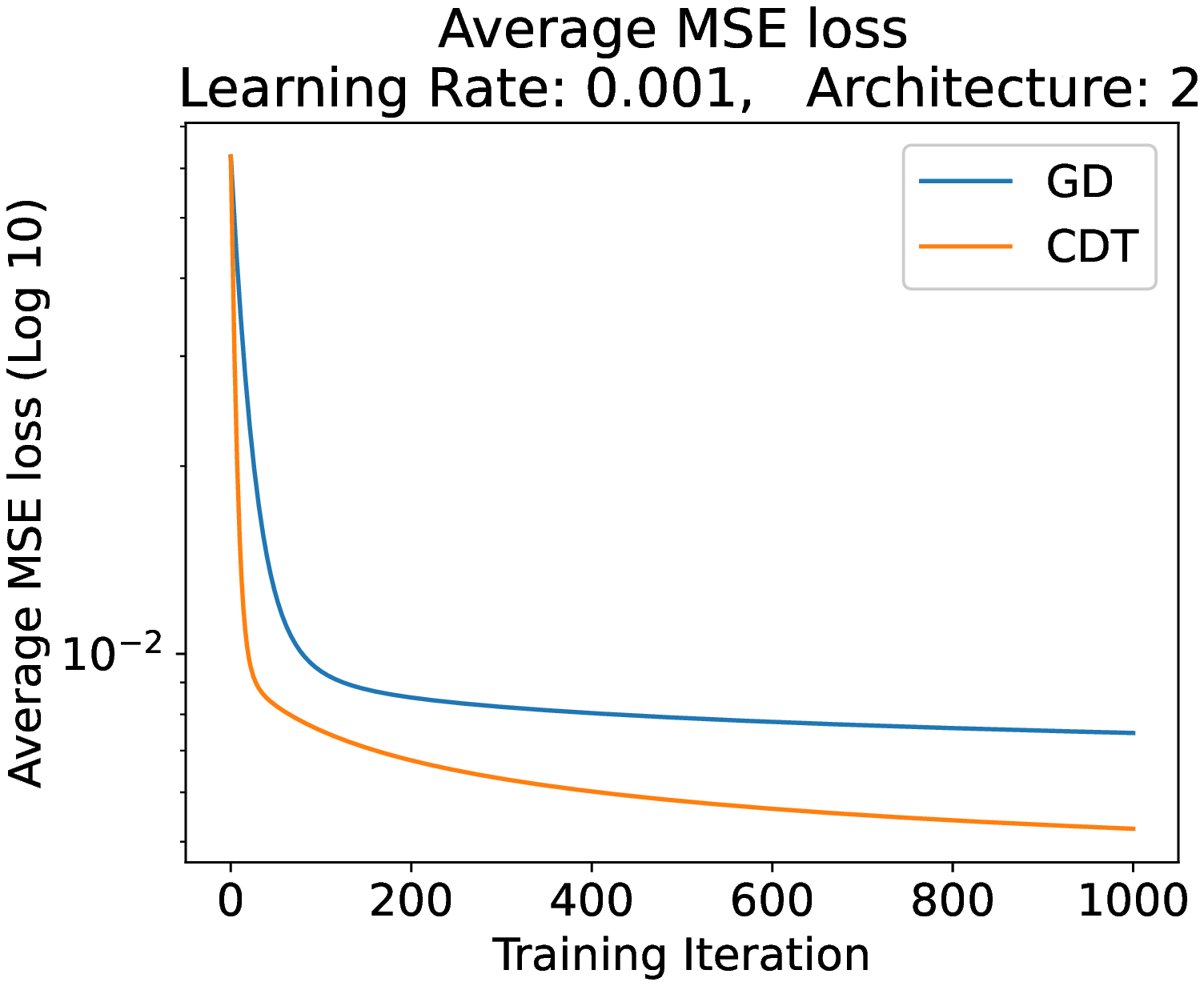}
         \label{fig:a1_0001}
     \end{subfigure}     
     \caption{Absolute MSE loss on validation data for fully connected ANN architecture 2 averaged over all  10 initializations and data shuffles during training with GD and CDT. CDT appears to converge with fewer iterations than GD for all learning rates. Moreover, CDT is stable at higher learning rates ($\alpha\geq1$), while GD diverges to $\infty_+$.}
\end{figure}
 \begin{figure}[!htp] 
     \begin{subfigure}{0.34\textwidth}
         \centering
         \caption{}
         \includegraphics[width=\textwidth]{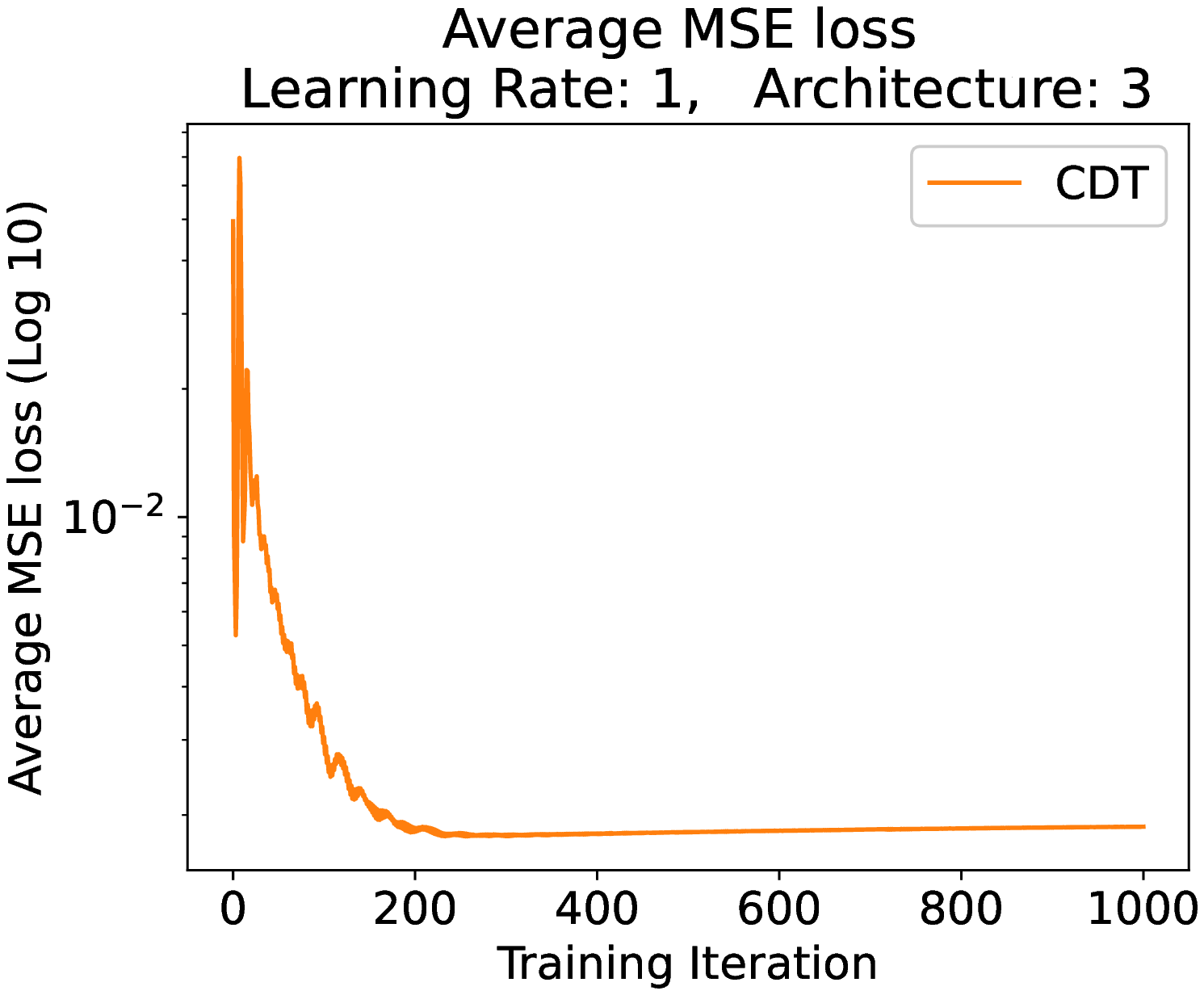}
         \label{fig:a2_1}
     \end{subfigure}
    \begin{subfigure}{0.34\textwidth}
         \centering
         \caption{}
         \includegraphics[width=\textwidth]{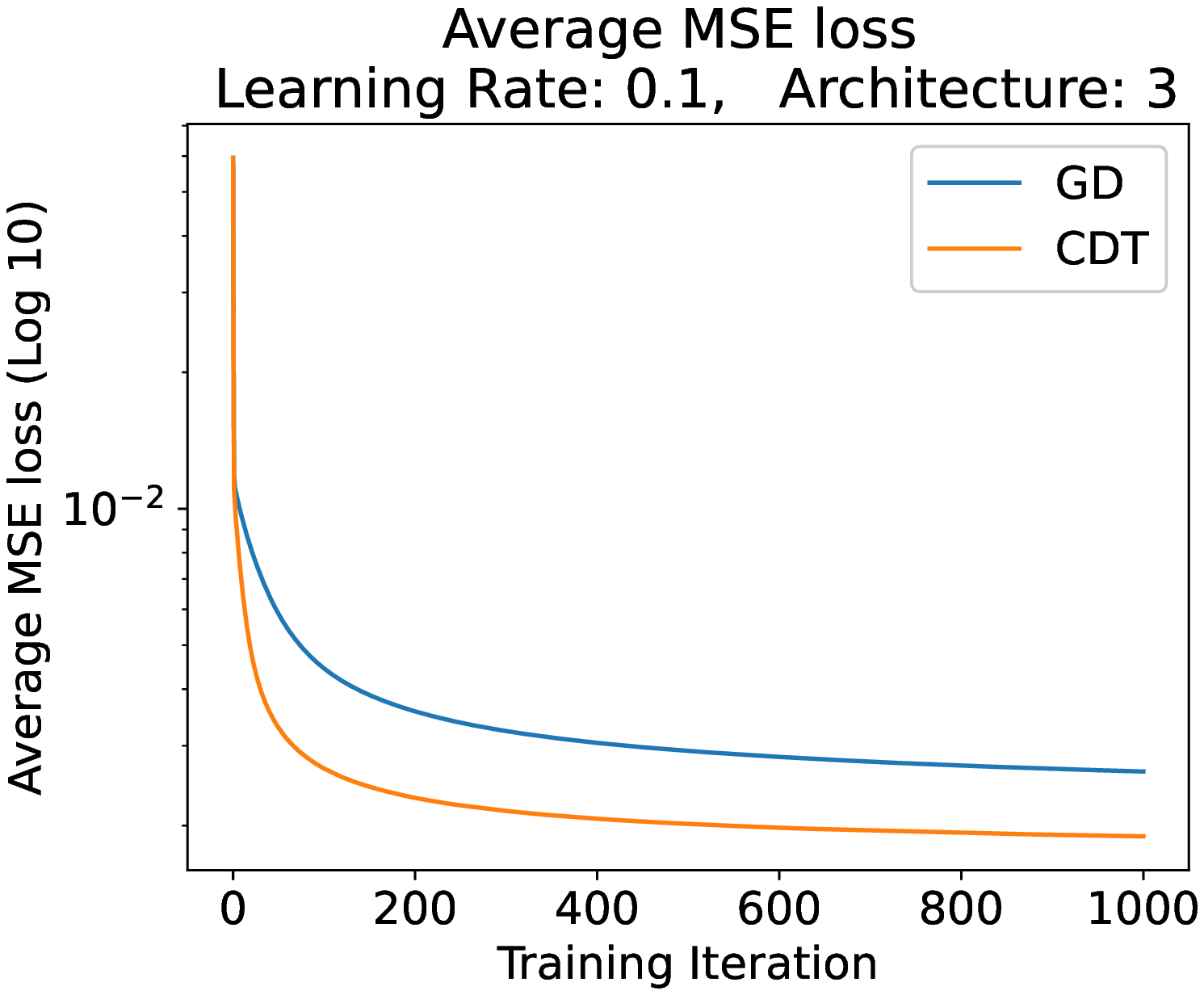}
         \label{fig:a2_01}
     \end{subfigure}
       \begin{subfigure}{0.34\textwidth}
         \centering
         \caption{}
         \includegraphics[width=\textwidth]{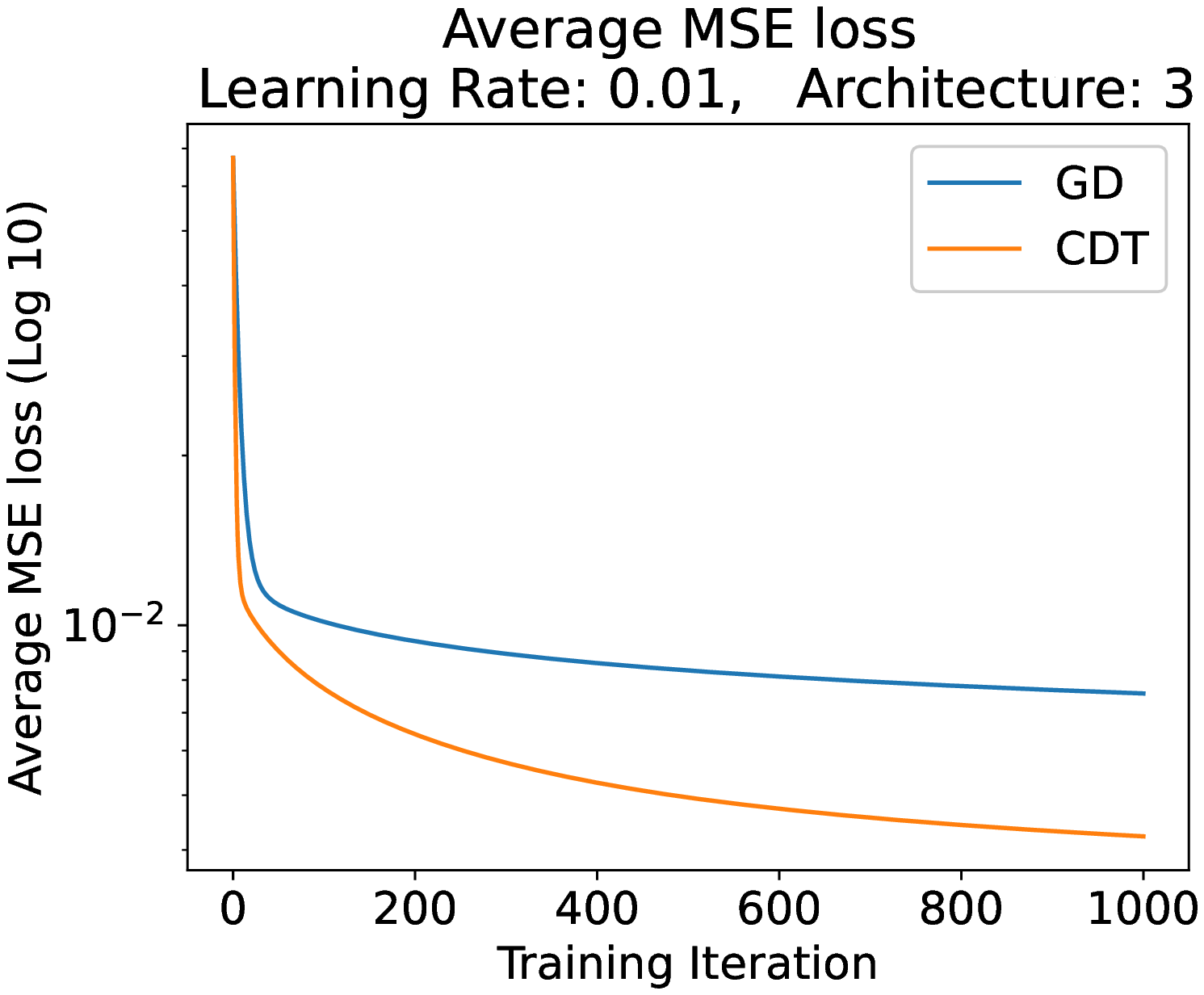}
         \label{fig:a2_001}
     \end{subfigure}
     \centering
    \begin{subfigure}{0.34\textwidth}
         \centering
         \caption{}
         \includegraphics[width=\textwidth]{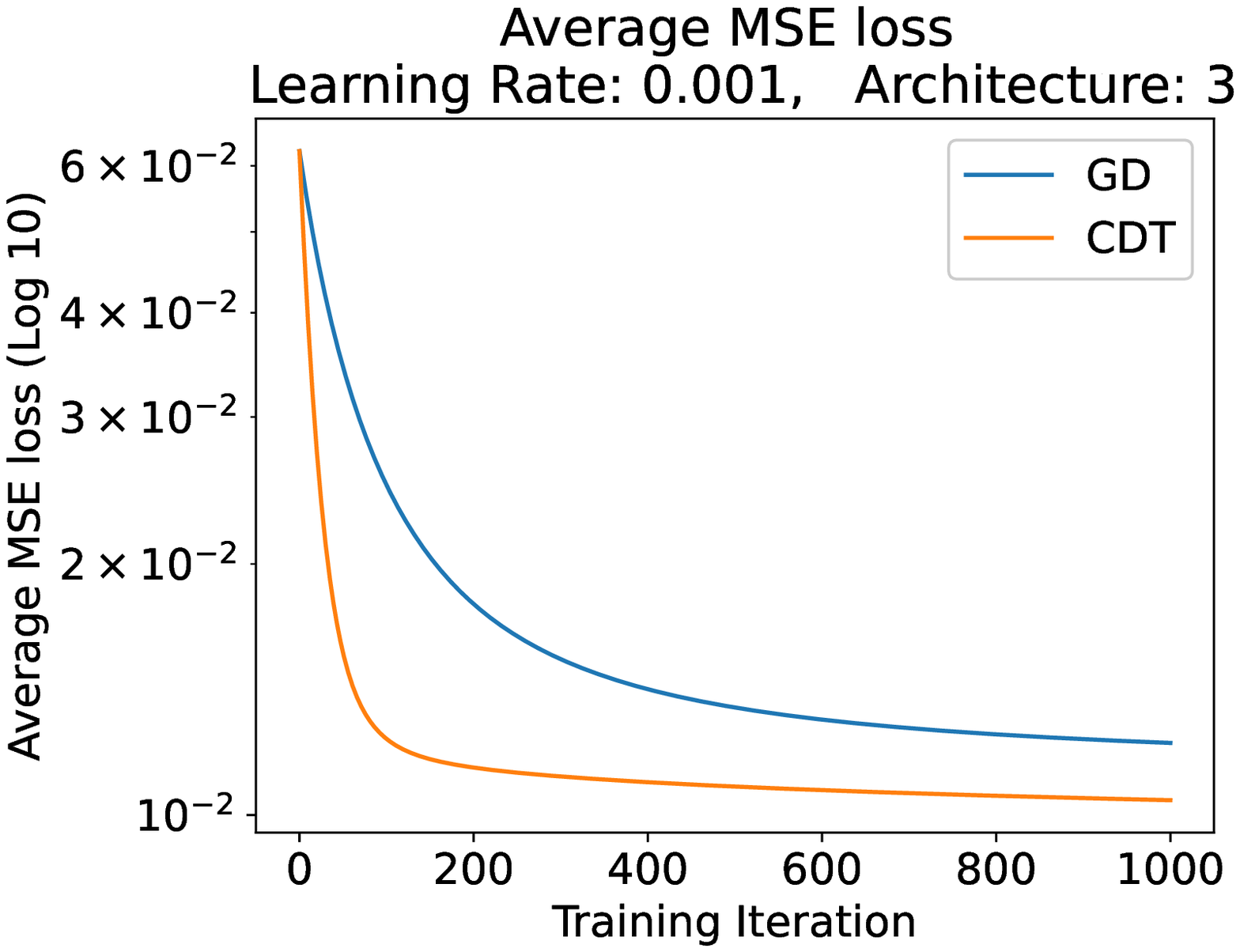}
         \label{fig:a2_0001}
     \end{subfigure}
     \caption{Absolute MSE loss on validation data for fully connected ANN architecture 3 averaged over all  10 initializations and data shuffles during training with GD and CDT. CDT converges with fewer iterations than GD for all learning rates. Moreover, CDT is stable at higher learning rates ($\alpha\geq1$), while GD diverges to $\infty_+$.}
\end{figure}

\newpage

\bibliography{Bibliography}

\section{Tables}
\begin{table}[h]
    \centering
    \begin{tabular}{P{2cm}|P{3.5cm}|P{4cm}}
        Architecture & Hidden Layers ($L-1$) & Width ($n_{l+1}$ $\forall l \in \{l_0,l_{L-1}\}$) \\
        \hline
        1 & 1 & 1500 \\
        2 & 3 & 500 \\
        3 & 6 & 250 \\
    \end{tabular}
    \caption{Single-target fully connected ANN architectures used for the regression experiments.}
    \label{tab:st-arch}
\end{table}

\begin{table}[!htp]
\begin{center}
\begin{tabularx}{0.8\textwidth}{P{1cm}l|P{1.5cm}P{2.5cm}P{4cm}P{2.5cm}}
$\alpha$& & Reachability & $|eig(\Theta(k_0))|<1$ &  Convergence  & Final validation loss (MSE $10^{-3}$) \\ \hline
1.000 &  GD  & Yes & No    &   No initialization &  $\infty \pm \infty$ \\ 
1.000 & CDT  & Yes  & No   &     All initializations & $150.01 \pm 350.50$ \\
\hline
0.100 & GD  & Yes & No &     Some initializations (7/10) & $3.04 \pm 0.59$ \\
0.100 & CDT & Yes & No   &     All initializations & $1.95 \pm 0.24$ \\
\hline
0.010 & GD  & Yes & Yes   &   All initializations & $2.70 \pm 0.61$ \\
 0.010 & CDT & Yes &  Yes  &  All initializations & $2.34 \pm 0.49$ \\
\hline
0.001 & GD  & Yes &Yes   &   All initializations & $5.90 \pm 0.95$ \\
 0.001 & CDT & Yes & Yes   &   All initializations & $3.72 \pm 0.76$

\end{tabularx}
\caption{Analytical and observed properties for fully connected architecture 1 on regression dataset.}
\label{table:a1}
\end{center}
\end{table}

\begin{table}[!htp]
\begin{center}
\begin{tabularx}{0.8\textwidth}{P{1cm}l|P{1.5cm}P{2.5cm}P{2.5cm}P{2.5cm}}
$\alpha$& & Reachability & $|eig(\Theta(k_0))|<1$ &  Convergence  & Final validation loss (MSE $10^{-3}$) \\ \hline
1.000 &  GD  & Yes & No    &   No initializations &  $\infty \pm \infty$ \\ 
1.000 & CDT  & Yes  & No   &    All initializations & $2.14 \pm 0.28$ \\
\hline
0.100 & GD  & Yes & No &  All initializations  & $2.31 \pm 0.45$ \\
0.100 & CDT & Yes & No   &    All initializations & $1.90 \pm 0.29$ \\
\hline
0.010 & GD  & Yes & Yes   &  All initializations & $3.71 \pm 0.62$ \\
 0.010 & CDT & Yes &  Yes  & All initializations & $2.62 \pm 0.54$ \\
\hline
0.001 & GD  & Yes &Yes   &  All initializations & $7.47 \pm 1.54$ \\
 0.001 & CDT & Yes & Yes   &  All initializations & $5.24 \pm 0.99$

\end{tabularx}
\caption{Analytical and observed properties for fully connected architecture 2 on regression dataset.}
\label{table:a2}
\end{center}
\end{table}

\begin{table}[!htp]
\begin{center}
\begin{tabularx}{0.8\textwidth}{P{1cm}l|P{1.5cm}P{2.5cm}P{2.5cm}P{2.5cm}}
$\alpha$& & Reachability & $|eig(\Theta(k_0))|<1$ &  Convergence  & Final validation loss (MSE $10^{-1}$) \\ \hline
1.000 &  GD  & Yes & No    &   No initializations &  $\infty \pm \infty$ \\ 
1.000 & CDT  & Yes  & No   &    All initializations & $1.88 \pm 0.27$ \\
\hline
0.100 & GD  & Yes & No &    All initializations & $2.63 \pm 0.80$ \\
0.100 & CDT & Yes & No   &    All initializations & $1.90 \pm 0.48$ \\
\hline
0.010 & GD  & Yes & Yes   &  All initializations & $7.58 \pm 1.84$ \\
 0.010 & CDT & Yes &  Yes  & All initializations & $4.23 \pm 1.05$ \\
\hline
0.001 & GD  & Yes &Yes   &  All initializations & $12.20 \pm 2.61$ \\
 0.001 & CDT & Yes & Yes   &  All initializations & $10.42 \pm 2.11$

\end{tabularx}
\caption{Analytical and observed properties for fully connected architecture 3 on regression dataset.}
\label{table:a3}
\end{center}
\end{table}

\begin{table}[!htp]
\begin{center}
\begin{tabularx}{0.8\textwidth}{P{1cm}l|P{1.5cm}P{2.5cm}P{2.5cm}P{2.5cm}}
$\alpha$& & Reachability & $|eig(\Theta(k_0))|<1$ &  Convergence  & Final validation loss (MSE $10^{-1}$) \\ \hline
1.000 &  GD  & Yes & No    &   No initializations &  $\infty \pm \infty$ \\ 
1.000 & CDT  & Yes  & No   &    All initializations & $2.59 \pm 0.15$ \\
\hline
0.100 & GD  & Yes & Yes &    All initializations & $3.37 \pm 0.38$ \\
0.100 & CDT & Yes & Yes &    All initializations & $3.33 \pm 0.37$ \\
\hline
0.010 & GD  & Yes & Yes   &  All initializations & $2.58 \pm 0.11$ \\
 0.010 & CDT & Yes &  Yes  & All initializations & $2.69 \pm 0.29$ \\
\hline
0.001 & GD  & Yes &Yes   &  All initializations & $2.67 \pm 0.10$ \\
 0.001 & CDT & Yes & Yes   &  All initializations & $2.58 \pm 0.11$

\end{tabularx}
\caption{Analytical and observed properties of ALEXNet on classification dataset.}
\label{table:alex}
\end{center}
\end{table}
\begin{table}
\centering

\begin{tabular}{|l|l|l|l|} 
\hline
 & Standard init. & NTK init.  & Improved standard  \\ 
\hline
Weight initialization  & $\mathcal{N}(0, \frac{\sigma_w^2}{s n_l})$ & $\frac{\sigma_w}{\sqrt{s n_l}}\mathcal{N}(0,1)$ & $\frac{1}{\sqrt{s}}\mathcal{N}(0, \frac{\sigma_w^2}{n_l})$  \\ 
\hline
Weight initialization  (conv.)  & $\mathcal{N}(0, \frac{\sigma_w^2}{s n_l n_m})$ & $\frac{\sigma_w}{\sqrt{s n_l n_m}}\mathcal{N}(0,1)$ & $\frac{1}{\sqrt{s}}\mathcal{N}(0, \frac{\sigma_w^2}{n_l n_m})$  \\ 
\hline
Bias initialization    & $\mathcal{N}(0, \sigma_b^2)$ & $\mathcal{N}(0, \sigma_b^2)$ & $\mathcal{N}(0, \sigma_b^2)$ \\
\hline
\end{tabular}
\caption{Different ANN weight and bias initializations}
\label{tab:NTK_init}
\end{table}

\end{document}